\newtheorem{thm}{Theorem}[section]
\newtheorem{lem}[thm]{Lemma}
\newtheorem{cor}[thm]{Corollary}
\theoremstyle{definition}
\newtheorem{defn}[thm]{Definition}
\newcounter{labelflag} \setcounter{labelflag}{0}
\newcommand{\be}{\begin{equation}}
\newcommand{\ee}{\end{equation}}
\newcommand{\R}{\mathbb{R}}
\newcommand{\N}{\mathbb{N}}
\def \cali {{  {\mathcal{I}} }}
\def \calm {{  {\mathcal{M}} }}
\def \cals {{  {\mathcal{S}} }}
\def \calf {{  {\mathcal{F}} }}
\def \cala {{  {\mathcal{A}}  }}
\def \calb {{  {\mathcal{B}}  }}
\begin{document}

\begin{titlepage}
\title{\Large\bf   
 Periodic and Almost Periodic 
 Random Inertial Manifolds
 for Non-Autonomous  Stochastic 
 Equations
   }
\vspace{7mm}

\author{ 
 Bixiang Wang  
\vspace{1mm}\\
Department of Mathematics, New Mexico Institute of Mining and
Technology \vspace{1mm}\\ Socorro,  NM~87801, USA \vspace{5mm}\\
 Email:    bwang@nmt.edu\\
}
 
\date{}
\end{titlepage}

\maketitle

\begin{abstract}
 By the Lyapunov-Perron method,
 we prove the existence of random inertial manifolds
 for a class of equations driven simultaneously by
 non-autonomous deterministic and stochastic forcing.
 These  invariant manifolds contain  tempered
 pullback random attractors if such attractors exist.
 We also prove pathwise periodicity and almost
 periodicity of inertial manifolds when 
 non-autonomous deterministic forcing
 is periodic and almost periodic in time, respectively.
\end{abstract}

{\bf Key words.}    Random inertial manifold, random invariant
manifold,   random attractor, periodic manifold, almost periodic
manifold.

 {\bf MSC 2010.} Primary 35B40. Secondary 35B41, 37L30.

\baselineskip=1.2\baselineskip

\section{Introduction}\label{intr}
\setcounter{equation}{0}

In this paper, we investigate the existence of random
inertial manifold (IM) for the following non-autonomous stochastic
equation on a  separable
 Hilbert space $H$ for $t>\tau$  with $\tau \in \R$:
\be
\label{pde1}
{\frac {du}{dt}} +A u
=F(u) +g(t) + {\frac {dW}{dt}}
\quad {\mbox{with }} \ 
u(\tau) = u_\tau,
\ee
where
$A: D(A) \subseteq H \to H$ is a symmetric positive operator
with compact inverse, $F$ is a nonlinearity, $g$ is a time-dependent
external forcing, and 
  $W$ is a  $H$-valued Wiener process 
  on  a probability space
$(\Omega, \calf, P)$.

If $g$ does not depend on time, then \eqref{pde1}
is said to be an autonomous stochastic equation.
The existence of IMs for autonomous random systems
has been studied by experts in \cite{ben1, bru1, chu1,
chu2, chu3, chu4, gir1} and the references therein.
For such a system, an IM is a random set
$\calm =\{ \calm (\omega): \omega \in \Omega \}$
which is invariant, exponentially attracts all solutions,
and is given by the graph of a finite-dimensional Lipschitz
map.
In the present paper,  we want to  consider the
non-autonomous  case where  $g$ varies
in time.  It seems that there is no result
 reported in the literature on IMs  when $g$
 is  time-dependent.
As we will see in Section \ref{eim}, 
an IM for a non-autonomous stochastic equation
is parametrized  not only  by $\omega \in \Omega$,
but also by initial times  $\tau \in \R$.
More precisely, an IM in this case is a
finite-dimensional Lipschitz manifold 
$\calm =\{\calm (\tau, \omega):
\tau \in \R, \omega \in \Omega \}$
that is   invariant and exponentially attracts
all solutions.
We will first prove the existence of such IMs
for equation \eqref{pde1} in Section \ref{eim}
under the classical gap condition. We will also
prove that   these IMs must contain tempered
pullback random attractors if such an attractor
exists.
In addition, we will investigate the dependence of
IMs on the external forcing $g$. If $g$ is almost
periodic, we show the
random  IMs are pathwise almost periodic.
We will also prove   pathwise periodicity of 
IMs when $g$ is periodic in time.

We remark that    constructions of random IMs
are quite similar to random invariant manifolds
which have been investigated recently
in \cite{arn1, car1, dua1, dua2, garr1, lia1, lu1,moh1, wann1}.
Of course, for IMs, one must establish the exponential
attracting property which is also called the asymptotic
completeness of IMs. Interestingly enough, the proof
of asymptotic completeness  is often 
based on similar arguments for 
constructing 
IMs but under even more restrictive conditions.

In the next section, we review some concepts
of random dynamical systems and define IMs
for non-autonomous stochastic equations.
Section \ref{eim}
is devoted to the existence of IMs
for non-autonomous random systems
 under a gap condition.
 In the last section, we show the periodicity and almost
 periodicity of IMs when $g$ is periodic and almost
 periodic in time, respectively.

\section{Notation}\label{nota}
\setcounter{equation}{0}

       In this section, we introduce the concept
       of random IMs for non-autonomous
       stochastic equations.
       The reader is referred to
        \cite{ben1, bru1, chu1,
chu2, chu3, chu4, gir1}  on IMs
for autonomous  random systems.

       Assume  
       $X$ is a separable Banach  space
       with norm $\| \cdot \|$, and 
        $(\Omega, \calf, P, \{\theta_t\}_{t\in \R} )$
       is  a metric dynamical system as in \cite{arn1}.  
   Let $D=\{D(\tau, \omega): \tau \in \R, \omega \in \Omega \}$
   be a family
   of nonempty  bounded subsets of $X$.
   Then $D$ is said  to be tempered if     
   $ 
   \lim\limits_{t \to -\infty}
   e^{ct} \| D(\tau +t, \theta_t \omega )\|_X =0$
   for   every $c>0$, where 
   $\| D\|_X = \sup\limits_{x \in D} \| x \|$.
   A family $D=\{D(\tau, \omega): \tau \in \R, \omega \in \Omega \}$ 
   of nonempty closed subsets 
    is said to be measurable if it is measurable  in $\omega$ for
    each fixed $\tau $. The following is the definition of cocycle for
    non-autonomous random equations.

\begin{defn} \label{ds1}
Suppose   $\Phi$: $ \R^+ \times \R \times \Omega \times  X
\to X$  satisfies,    for any
  $\tau\in \R$,
  $\omega \in   \Omega $
  and    $t, s \in \R^+$,  
\begin{itemize}
\item [(i)]   $\Phi (\cdot, \tau, \cdot, \cdot): \R ^+ \times \Omega \times X
\to X$ is
 $(\calb (\R^+)   \times \calf \times \calb (X), \
\calb(X))$-measurable;

\item[(ii)]    $\Phi(0, \tau, \omega, \cdot) $ is the identity on $X$;

\item[(iii)]    $\Phi(t+ s, \tau, \omega, \cdot) =
 \Phi(t,  \tau +s,  \theta_{s} \omega, \cdot) 
 \circ \Phi(s, \tau, \omega, \cdot)$.
  \end{itemize}
 Such   $\Phi$ is called a cocycle  in $X$  over
$(\Omega, \calf, P,  \{\theta_t\}_{t \in \R})$.
If,  in addition,    $\Phi(t, \tau, \omega,  \cdot): X \to  X$ is continuous,
then we say $\Phi$ is a continuous cocycle in $X$.
\end{defn}
       
 \begin{defn}\label{defim}
   A measurable family $\calm
   =\{\calm (\tau, \omega): \tau \in \R,
   \omega \in \Omega \}$ of subsets of $X$
   is called a random  inertial manifold of $\Phi$ if the
   following conditions  (i)-(iii) are satisfied:
   \begin{itemize}
\item [(i)]  There exists a finite-dimensional subspace $X_1$
such that $X=X_1 \oplus X_2$ and there is a mapping
$m: \R \times \Omega \times X_1 \to X_2$ such that
for every $\tau \in \R$  and $\omega \in \Omega$,
\be\label{nota1}
\calm (\tau, \omega)
= \{ x + m(\tau, \omega) (x): x \in X_1 \}
\ee
where $m(\tau, \omega) (x)$ is measurable in $\omega\in \Omega$ and is
Lipschitz continuous in  $x \in X_1$;
 
\item[(ii)]   $\calm$ is  invariant:
$\Phi (t, \tau, \omega, \calm (\tau, \omega))
= \calm (\tau +t, \theta_t \omega )$ for all $t\in \R^+$,
$\tau \in \R$ and $\omega \in \Omega$;

   \item[(iii)]  $\calm$ is asymptotically complete:
   for every $\tau\in \R$,  $\omega \in \Omega$
   and $x\in X$, there exists $z=z(\tau, \omega, x)
   \in \calm (\tau, \omega)$ such that for all $t \ge 0$,
   $$
   \| \Phi (t, \tau, \omega, x)
   -\Phi (t,\tau, \omega, z) \|
   \le  c_1 e^{-c_2 t},
   $$
   where $c_1$  and $c_2$ are positive numbers
   depending on $\tau, \omega$  and $x$.
  \end{itemize}
 \end{defn}
 
 Next, we  define pathwise almost periodic random
 inertial manifolds.   
 Let
   $g: \R \to X$ be a continuous function.
   Then $g$  is  said to be  almost periodic 
   (see, e.g., \cite{yos1})
   if for every $\varepsilon>0$
    there exists a positive
   number $l = l(\varepsilon)$ 
   such that every interval of length $l$ contains a number $t_0$ for which
   $ \| g(t+t_0) - g(t) \| < \varepsilon$  for all $ t \in \R$.
   Motivated by almost periodic functions, we introduce the following concept.
 
 \begin{defn}\label{defperap}
 Let   $\calm
   =\{\calm (\tau, \omega): \tau \in \R,
   \omega \in \Omega \}$ be a random inertial
   manifold of $\Phi$ given by \eqref{nota1}.
  Then  $\calm$ is said to be pathwise almost periodic if 
   for every $\varepsilon>0$, there exists a positive
   number $l = l(\varepsilon)$ such that any interval of length
   $l$ contains a number $\tau_0$ such that
   $$
   \sup_{x\in X_1} \| m(\tau +\tau_0, \omega)(x)
   -m(\tau, \omega) (x) \| \le \varepsilon,
   \ \ \mbox{for all} \ \ \tau \in \R
   \  \mbox{and} \ \omega \in \Omega.
   $$
   
   If   there exists $T>0$ such that
  $m(\tau+T, \omega)(x) = m(\tau, \omega)(x)$
  for all $\tau \in \R$, $ \omega \in \Omega$
  and $x\in X_1$, then $\calm$ is called a
  pathwise periodic random inertial manifold with period $T$.
  \end{defn}
  
  It is evident that if $\calm$ is a $T$-periodic
  random inertial manifold in the sense of Definition \ref{defperap}, 
  then 
  $\calm(\tau +T, \omega) =\calm (\tau, \omega)$
  for all $\tau\in \R$ and $\omega \in \Omega$.
  Similarly, if $\calm$ is almost periodic, then 
 for every $\varepsilon>0$, there exists a positive
   number $l = l(\varepsilon)$ such that any interval of length
   $l$ contains a number $\tau_0$ such that
   for all $\tau \in \R$ and $\omega \in \Omega$,
   \be\label{nota2}
    d^H (\calm(\tau +\tau_0, \omega), \calm (\tau, \omega ))
    \le \varepsilon
    \ \ \mbox{and} \ \ 
  d^H (\calm(\tau , \omega), \calm (\tau +\tau_0, \omega) )
  \le \varepsilon,
  \ee
  where $d^H$ is the Hausdorff semi-distance between two subsets
  of $X$. Note that \eqref{nota2} actually indicates
  the Hausdorff distance of
  $\calm(\tau +\tau_0, \omega)$ and $ \calm (\tau, \omega)$
  is controlled by $\varepsilon$.

\section{Existence of Random Inertial Manifolds }
\label{eim}
\setcounter{equation}{0}

In this section,  we construct random inertial
manifolds for the stochastic equation
\eqref{pde1} in a 
  separable Hilbert space 
  $H$
  with norm
$\| \cdot \|$. 
Assume that
 $W$  in \eqref{pde1}
 is a two-sided $H$-valued Wiener process with covariance 
operator $Q$ of trace class on  a probability space
$(\Omega, \calf, P)$ where
$\Omega = \{ \omega \in C(\R, H): \omega (0) =0 \}$
with compact-open topology, $\calf$ is the Borel $\sigma$-algebra and 
$P$ the Wiener measure.
Suppose $A: D(A) \subseteq H \to H$ is a symmetric positive operator
with compact inverse. Then $H$ has an orthonormal basis 
$\{ e_n\}_{n=1}^\infty$ consisting of  eigenvectors of $A$:
$$
Ae_n = \lambda_n e_n,
\quad 0<\lambda_1 \le \lambda_2\le \dots \le \lambda_n \le \cdots
\quad \mbox{with } \  \lambda_n \to \infty.
$$
Let $F: D(A^\alpha) \to  H$   be a Lipschitz
nonlinearity for some $\alpha \in [0, {\frac 12})$; that is,
there exists a positive constant $L$ such that
\be\label{f1}
F(0) =0, \quad 
\| F(u_1) -F(u_2) \|
\le L \| A^\alpha (u_1 -u_2) \|
\ee
for all $u_1, u_2 \in D(A^\alpha)$. 
This implies that for all $u  \in D(A^\alpha)$,
\be\label{f2}
\| F(u) \| \le L \| A^\alpha u \|.
\ee
For the non-autonomous term $g$ in \eqref{pde1} we assume
that $g \in L^2_{loc} (\R, D(A^\alpha))$ and
\be
\label{g1}
\int^0_{-\infty} e^{\lambda_1 s} \| A^\alpha g(s) \| ds <\infty,
\ee
where $\lambda_1$ is the first eigenvalue of $A$.
These assumptions imply that 
  for every $\tau \in \R$,
  \be
\label{g2}
\int^0_{-\infty} e^{\lambda_1 s} \| A^\alpha g^\tau(s) \| ds <\infty,
\ee
where $g^\tau(\cdot)
=g(\cdot + \tau)$ is the translation of $g$ by $\tau$.

Given an integer $n \ge 1$, let
$P_n: H \to span\{e_1, \cdots, e_n \}$ be the orthogonal
projection and $Q_n = I-P_n$.  Then    we have
(see, e.g., \cite{chu1}) for all $u\in H$, 
\be
\label{pn1}
\|A^\alpha e^{-At} P_n u \|
\le \lambda_n^\alpha e^{-\lambda_n t} \| u \|,
\ \ \  t \le 0,
\ee
\be\label{qn1}
\| e^{-At} Q_n u \|
\le e^{-\lambda_{n+1} t } \| u \|,   \ \ \ t \ge 0,
\ee
and
\be\label{qn2}
\| A^\alpha e^{-At} Q_n u \|
\le
\left (
\alpha^\alpha t^{-\alpha} + \lambda^\alpha_{n+1} 
\right ) e^{-\lambda_{n+1} t} \| u\|,
\ \ \  t > 0.
\ee
Note that   $e^{-At}: P_n H \to P_n H$
is invertible for any $t\ge 0$, 
 and its inverse is denoted by $e^{At}$.
 This means  $e^{-At}:  P_n H \to P_n H$
 is defined for  all $t\in \R$.
 Let $\{\theta_t\}_{t\in \R}$ be a group of translations
 on $\Omega$ given by: 
 $\theta_t \omega  (\cdot)
 =\omega (\cdot +t) -\omega (t)
  \mbox{ for all }  \omega \in \Omega
 \  \mbox{ and }  t \in \R.$
 Then $(\Omega, \calf, P, \{\theta_t\}_{t\in \R})$
 is a metric dynamical system as in \cite{arn1}.
 Let $z: \Omega \to  D(A^\alpha)$ be  the unique stationary
 solution of  
 \be\label{a1}
  dz(\theta_t \omega) +A z(\theta_t \omega)
 =dW.
 \ee
 Then by \cite{chu1}, $z(\theta_t \omega)$ has a continuous
 version from $\R$ to $D(A^\alpha)$ for every fixed $\omega$,
 and is tempered in the sense that for every $c>0$ and $\omega\in
 \Omega$,
 $$ \lim_{t \to -\infty}
 e^{ct} \|  A^\alpha z(\theta_t \omega)\| =0.
 $$
 In terms of \eqref{a1}, we can transfer the stochastic equation
 \eqref{pde1} into a pathwise random one by introducing a new variable
 $v(t) = u(t) -z(\theta_t \omega)$.
 By \eqref{pde1} and \eqref{a1} we get
 \be\label{a20}
 {\frac {dv}{dt}} +Av
 =F(v+ z(\theta_t \omega ) )
 + g(t),
 \quad t>\tau, \ \ \ v(\tau) = v_\tau.
 \ee
 By \eqref{f1} and the Banach fixed point theory
 as in \cite{bru1}, one can prove that
 for every $v_\tau \in D(A^\alpha)$
 with $0\le \alpha <{\frac 12}$, problem
 \eqref{a20} has a unique mild solution in $C([\tau, \infty),
 D(A^\alpha))$, which is measure in $\omega$ and continuous
 in $v_\tau$ in $(D(A^\alpha))$.
 To indicate the dependence  on all related parameters, we write
 the solution  of \eqref{a20} as
 $v(t, \tau, \omega, g, v_\tau)$ which is given by
 $$
 v(t, \tau, \omega, g, v_\tau)
 =e^{-A (t-\tau)} v_\tau
 + \int_\tau^t e^{-A (t-s)} 
 \left (
 F(v(s) + z(\theta_s \omega) ) + g(s) 
 \right ) ds.
$$
 Let $\Psi: \R^+ \times \R \times \Omega 
 \times D(A^\alpha) \to D(A^\alpha)$ be a mapping given by,
 for all $t \in \R^+, \tau \in \R, \omega \in \Omega$
 and $v_\tau \in  D(A^\alpha)$,
 \be\label{a40}
 \Psi (t, \tau, \omega, v_\tau)
 =v(t+\tau, \tau, \theta_{-\tau} \omega, g, v_\tau)
 =v(t, 0, \omega, g^\tau, v_\tau),
 \ee
 where $g^\tau (\cdot) = g(\cdot +\tau)$ as usual.
 Then we find that $\Psi$ is a continuous cocycle
 over $(\Omega, \calf, P, \{\theta_t\}_{t\in \R} )$.
 Note that if $v$ is a solution of \eqref{a20}, then
 the process
 \be\label{a3}
 u(t, \tau, \omega, g, u_\tau)
 =v(t, \tau, \omega, g, v_\tau)
 +z(\theta_t \omega)
 \quad \mbox{with} \ \ 
 u_\tau = v_\tau + z(\theta_\tau \omega)
 \ee
 is a mild solution of the stochastic equation
 \eqref{pde1}. Based on this fact, we can define
 a continuous cocycle $\Phi$ for \eqref{pde1} by
 \be\label{a41}
 \Phi (t,\tau, \omega, u_\tau)
 =u(t+\tau, \tau, \theta_{-\tau} \omega, g, u_\tau)
 \ee
 for  all $t\in \R^+, \tau\in \R, \omega \in \Omega$
 and $u_\tau \in D(A^\alpha)$.
 By \eqref{a40}-\eqref{a41}  we have
 \be\label{a42}
 \Phi (t, \tau, \omega, u_\tau)
 = \Psi (t, \tau, \omega, u_\tau -z(\omega))
 + z(\theta_t \omega).
 \ee
 We often need to replace $g$ by $g^\tau$
 on the right-hand side of \eqref{a20} and conside the
 equation
  \be\label{a50}
 {\frac {dv}{dt}} +Av
 =F(v+ z(\theta_t \omega ) )
 + g^\tau (t),
 \quad t> r, \ \ \ v(r) = v_r.
 \ee
 The solution of \eqref{a50}  is given by, for $t \ge r$,
 \be\label{a52}
 v(t, r, \omega, g^\tau, v_r)
 =e^{-A (t-r)} v_r
 + \int_r^t e^{-A (t-s)} 
 \left (
 F(v(s) + z(\theta_s \omega) ) + g(s +\tau) 
 \right ) ds.
 \ee
 Note that the solution of  \eqref{a50} is defined only for
 forward time $t\ge r$ in general. But we need to consider 
 backward  solutions when constructing inertial manifolds.
For that purpose, we introduce the following concept
of solutions defined on $(-\infty, 0]$.

\begin{defn}
\label{defb1}
Given $\tau \in \R$ and $\omega \in \Omega$,
a continuous mapping $\xi: (-\infty, 0]
\to D(A^\alpha)$ is called  a mild solution of \eqref{a50}
on $(-\infty, 0]$ if  
$v(t,r,\omega, g^\tau, \xi(r)) =\xi (t)$ 
for all $r\le t\le 0$,
where
$v(t,r,\omega, g^\tau, \xi(r))$ is the unique solution
of \eqref{a50} with  initial value $\xi(r)$.
\end{defn}
 
 We now  assume that
 there is $n\in \N$ such that
 \be\label{b1}
 \lambda_{n+1} -\lambda_n
 \ge {\frac {2L}k} \left (
 \lambda^\alpha_{n+1} +\lambda^\alpha_n
 +c_\alpha (\lambda_{n+1} -\lambda_n)^\alpha
 \right )
 \quad \mbox{for   some } \  k\in (0, 1),
 \ee
 where $c_\alpha$ is a nonnegative
  number given by
 $$
 c_\alpha
 = \alpha^\alpha\int_0^\infty
 s^{-\alpha} e^{-s} ds
 \quad \mbox{if }  \ \alpha>0
 \quad \mbox{ and } \ c_\alpha = 0
 \quad \mbox{if } \ \alpha =0.
 $$
 For convenience, we set
 \be\label{b3}
 \mu = \lambda_n + {\frac {2L}k} \lambda_n^\alpha.
 \ee
 Then by \eqref{b1} we have
 $\mu \in (\lambda_n, \lambda_{n+1})$. Let $\cals$
 be the Banach space defined by
 $$
 \cals =\{ \xi\in C((-\infty, 0], D(A^\alpha)): \sup_{t\le 0}
 e^{\mu t} \| A^\alpha  \xi (t) \| < \infty  \}
 $$
 with norm $\| \xi \|_{\cals}
 = \sup\limits_{t\le 0}
 e^{\mu t} \| A^\alpha  \xi (t)\|$.
 Given $\tau \in \R$  and $\omega \in \Omega$, denote by
 $$\calm (\tau, \omega)
 =\{ \xi (0): \xi \in \cals
 \ \mbox{and} \  \mbox{is a
 mild solution of \eqref{a50} on } (-\infty, 0]  \ 
 \mbox{by Definition} \ \ref{defb1}\}
 $$
 \be\label{b4}
 =\{ \xi (0): \xi \in \cals
 \ \mbox{and} \ v(t,r, \omega, g^\tau, \xi(r)) = \xi (t)
 \  \mbox{for all} \  r\le t \le 0 \}.
 \ee
 We will show $\calm
 =\{ \calm (\tau, \omega): \tau \in \R, \omega \in \Omega \}$
 is an inertial manifold of \eqref{a50} for which we need:
 
 \begin{lem}
 \label{lemb1}
 Suppose  \eqref{f1}, \eqref{g1}  and \eqref{b1}-\eqref{b3}
 hold, and $\xi \in  \cals$.
 Then $\xi$ is a mild solution of  \eqref{a50} on $(-\infty, 0]$
 in the  sense of Definition \ref{defb1}
 if and only if there exists $x \in P_n H$ such that for all
 $t \le 0$,
 $$
 \xi (t)
 = e^{-At} x
 -\int_t^0 e^{-A (t-s)} P_n 
 \left ( F( \xi (s) + z(\theta_s \omega))
 + g(s+\tau) \right ) ds
 $$
 \be\label{b5}
 +
 \int^t_{-\infty}  e^{-A (t-s)} Q_n 
 \left ( F( \xi (s) + z(\theta_s \omega))
 + g(s+\tau) \right ) ds.
 \ee
 \end{lem}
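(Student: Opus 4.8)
The plan is to establish both implications directly from the variation-of-constants formula \eqref{a52}, by projecting onto $P_nH$ and $Q_nH$ and using the smoothing/decay estimates \eqref{pn1}--\eqref{qn2} together with the fact, ensured by \eqref{b1}--\eqref{b3}, that $\mu\in(\lambda_n,\lambda_{n+1})$.

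For the ``only if'' direction, suppose $\xi\in\cals$ is a mild solution of \eqref{a50} on $(-\infty,0]$ in the sense of Definition \ref{defb1}, so that for all $r\le t\le 0$,
\[
\xi(t)=e^{-A(t-r)}\xi(r)+\int_r^t e^{-A(t-s)}\big(F(\xi(s)+z(\theta_s\omega))+g(s+\tau)\big)\,ds.
\]
Applying the projection $P_n$ (which commutes with $e^{-At}$), setting $x:=P_n\xi(0)$, taking $t=0$ in the resulting identity and solving for $P_n\xi(r)$ — legitimate because $e^{-At}$ is invertible on the finite-dimensional space $P_nH$ — I get, after renaming $r$ as $t$, the first two terms on the right-hand side of \eqref{b5}. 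Applying $Q_n$ to the Duhamel identity and letting $r\to-\infty$, the boundary term $e^{-A(t-r)}Q_n\xi(r)$ tends to $0$: by \eqref{qn1}--\eqref{qn2} and the bound $\|A^\alpha\xi(r)\|\le\|\xi\|_{\cals}e^{-\mu r}$ (hence $\|\xi(r)\|\le\lambda_1^{-\alpha}\|\xi\|_{\cals}e^{-\mu r}$) it is controlled by a constant times $e^{-\lambda_{n+1}t}e^{(\lambda_{n+1}-\mu)r}$, which vanishes since $\mu<\lambda_{n+1}$; simultaneously the integral $\int_r^t e^{-A(t-s)}Q_n(\cdots)\,ds$ converges in $D(A^\alpha)$ to $\int_{-\infty}^t e^{-A(t-s)}Q_n(\cdots)\,ds$, because by \eqref{qn2}, \eqref{f1} and \eqref{g1} its $A^\alpha$-norm is dominated by
\[
\int_{-\infty}^t\big(\alpha^\alpha(t-s)^{-\alpha}+\lambda_{n+1}^\alpha\big)e^{-\lambda_{n+1}(t-s)}\big(L\|\xi\|_{\cals}e^{-\mu s}+L\|A^\alpha z(\theta_s\omega)\|+\|A^\alpha g(s+\tau)\|\big)\,ds<\infty,
\]
the singularity $(t-s)^{-\alpha}$ being integrable near $s=t$ since $\alpha<\tfrac12$, while decay at $-\infty$ comes from $\lambda_{n+1}-\mu>0$, the temperedness of $z$, and \eqref{g1}. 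Adding the $P_n$- and $Q_n$-pieces yields \eqref{b5}.

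For the ``if'' direction, assume $\xi\in\cals$ satisfies \eqref{b5} for some $x\in P_nH$; I want to verify that $v(t,r,\omega,g^\tau,\xi(r))=\xi(t)$ for all $r\le t\le 0$, i.e. the Duhamel identity above. Writing $\eta(s):=F(\xi(s)+z(\theta_s\omega))+g(s+\tau)$, I evaluate \eqref{b5} at $r$ and apply $e^{-A(t-r)}$; using $e^{-A(t-r)}e^{-Ar}x=e^{-At}x$ on $P_nH$ and $e^{-A(t-r)}e^{-A(r-s)}=e^{-A(t-s)}$ (valid since on $P_nH$ the group $e^{-A\cdot}$ is defined for all real times, and in the $Q_n$-integral $t-r\ge 0$, $r-s\ge 0$), I obtain
\[
e^{-A(t-r)}\xi(r)=e^{-At}x-\int_r^0 e^{-A(t-s)}P_n\eta(s)\,ds+\int_{-\infty}^r e^{-A(t-s)}Q_n\eta(s)\,ds.
\]
Adding $\int_r^t e^{-A(t-s)}\eta(s)\,ds$, splitting it into its $P_n$- and $Q_n$-parts and combining — on $P_nH$ one has $-\int_r^0+\int_r^t=-\int_t^0$, and on $Q_nH$ one has $\int_{-\infty}^r+\int_r^t=\int_{-\infty}^t$ — the right-hand side collapses to the right-hand side of \eqref{b5} evaluated at $t$, which equals $\xi(t)$. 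Hence $\xi$ is a mild solution in the sense of Definition \ref{defb1}.

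I expect the only real work to be bookkeeping: checking convergence of the improper $Q_n$-integral and vanishing of the boundary term as $r\to-\infty$ (both resting on $\mu<\lambda_{n+1}$ and on integrability of $(t-s)^{-\alpha}$, i.e. $\alpha\in[0,\tfrac12)$), and justifying the algebra of $e^{-A\cdot}$ on $P_nH$ for negative times. Note that the gap condition \eqref{b1} itself is essentially unused in this lemma — only $\mu\in(\lambda_n,\lambda_{n+1})$ is needed here; \eqref{b1} will instead enter afterwards, when one runs a contraction mapping on $\cals$ to show that $\calm(\tau,\omega)$ is a nonempty Lipschitz graph which is, moreover, asymptotically complete.
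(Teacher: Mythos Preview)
Your proof is correct and follows essentially the same route as the paper: project the Duhamel formula onto $P_nH$ and $Q_nH$, set $x=P_n\xi(0)$ and invert $e^{-At}$ on $P_nH$ to obtain the $P_n$-part, let $r\to-\infty$ in the $Q_n$-part using $\mu<\lambda_{n+1}$ to kill the boundary term and ensure convergence of the improper integral, and then verify the converse by direct substitution. The paper in fact relegates the ``if'' direction to ``simple calculations,'' so your explicit semigroup bookkeeping there is more detailed than what the paper provides; your remark that only $\mu\in(\lambda_n,\lambda_{n+1})$ (rather than the full gap condition \eqref{b1}) is used here is also accurate.
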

 \begin{proof}
 Given $\xi \in \cals$ and $t \le 0$, it is evident that
 the first integral on $(t,0)$ in \eqref{b5} is well-defined.
 We now  prove the second integral on $(-\infty, t)$ exists.
 By \eqref{g2},  \eqref{qn1}
 and $\lambda_{n+1} \ge \lambda_1$  we have
  \be\label{b20}
 \int^t_{-\infty} \| A^\alpha e^{-A (t-s)} Q_n g(s+\tau) \| ds
 \le
  \int^t_{-\infty} e^{-\lambda_{n+1} (t-s)}
   \| A^\alpha   g(s+\tau) \| ds <\infty.
  \ee
  By \eqref{f2}  and \eqref{qn2} we have
  $$
  \int^t_{-\infty} \| A^\alpha  e^{-A (t-s)} Q_n 
  F( \xi (s) + z(\theta_s \omega) )  \| ds
  $$
  $$
  \le
  L\int^t_{-\infty}
  \left (
  {\frac {\alpha^\alpha}{ (t-s)^{\alpha} }} +\lambda^\alpha_{n+1}
  \right ) e^{-\lambda_{n+1} (t-s)}
  \| \xi (s) +  z(\theta_s \omega)\|_{D(A^\alpha)} ds
  $$
    \be\label{b21}
  \le
  L\| \xi + z(\theta_s\omega)\|_{\cals}
  \int^t_{-\infty}
  \left (
  {\frac {\alpha^\alpha}{ (t-s)^{\alpha} }} +\lambda^\alpha_{n+1}
  \right )  e^{-\lambda_{n+1} t} e^{(\lambda_{n+1} -\mu)  s}ds
  < \infty,
\ee
where the last inequality follows from
  $\mu <\lambda_{n+1}$ and the temperedness of  $z$.
  By \eqref{b20} and \eqref{b21}, the integrals in \eqref{b5}
  we well-defined in $D(A^\alpha)$ for all $\xi \in \cals$ and $t \ge 0$.
  If $\xi \in \cals$ is a solution of \eqref{a50} on $(-\infty, 0]$, then by
  \eqref{a52} and Definition \ref{defb1} we  get  for all
  $r<t \le 0$,
  $$\xi (t)
  =e^{-A (t-r)}\xi (r)
  +\int_r^t e^{-A (t-s)}
  ( F(\xi (s) + z(\theta_s \omega)) + g(s+\tau) ) ds,
  $$
  which implies for all  $r< t \le 0$,
  \be\label{b6}
  P_n \xi (t)
  =e^{-A (t-r)}P_n \xi (r)
  +\int_r^t e^{-A (t-s)} P_n
  ( F(\xi (s) + z(\theta_s \omega)) + g(s+\tau) ) ds,
 \ee
  and
  \be\label{b7}
  Q_n \xi (t)
  =e^{-A (t-r)}Q_n \xi (r)
  +\int_r^t e^{-A (t-s)} Q_n
  ( F(\xi (s) + z(\theta_s \omega)) + g(s+\tau) ) ds.
\ee
  We get from \eqref{b6} for $t =0$
  $$
  P_n \xi (0)
  =e^{A  r }P_n \xi (r)
  +\int_r^0  e^{ A  s} P_n
  ( F(\xi (s) + z(\theta_s \omega)) + g(s+\tau) ) ds
 $$
 and hence
  \be\label{b8}
   e^{-At} P_n \xi (0)
  =e^{-A (t-r)}P_n \xi (r)
  +\int_r^0 e^{-A (t-s)} P_n
  ( F(\xi (s) + z(\theta_s \omega)) + g(s+\tau) ) ds.
 \ee
 It follows   from \eqref{b6}  and \eqref{b8} that
  \be\label{b9}
  P_n \xi (t)
  =e^{-A t  }P_n \xi (0 )
  -   \int_t^0   e^{-A (t-s)} P_n
  ( F(\xi (s) + z(\theta_s \omega)) + g(s+\tau) ) ds.
 \ee
 On the other hand,  since 
 $\mu\in (\lambda_n, \lambda_{n+1} )$,
 by \eqref{qn1} we have  for $ r<t \le 0$,
 \be\label{b10}
 \| e^{-A(t-r)} Q_n \xi (r) \|_{D(A^\alpha)}
 \le e^{-\lambda_{n+1} (t-r)} \| \xi (r) \|_{D(A^\alpha)}
 \le e^{-\lambda_{n+1} t}
 e^{(\lambda_{n+1} -\mu )r }\| \xi \|_{\cals}
 \to 0
 \ee
 as  $ r \to -\infty$.
 Taking the limit of \eqref{b7} as $r \to -\infty$, by
 \eqref{b9} and the fact
 $\xi (t)
 =P_n \xi (t)
 +Q_n \xi (t)$ we find that $\xi$ satisfies \eqref{b5}
 with $x= P_n \xi (0)$.
 
 Suppose now $\xi \in \cals$  satisfies \eqref{b5}. Then by
 simple calculations, one can verify that for all
 $r\le t \le 0$,
 $v(t,r,\omega, g^\tau, \xi (r))
 =\xi (t)$ and hence $\xi$ is  a solution of  \eqref{a50}
 on $(-\infty, 0]$.
 \end{proof}
 
 Next, we will find all solutions $\xi \in \cals$ of 
 \eqref{a50}  on $(-\infty, 0]$ in order to characterize
 the structure of 
 $\calm$ given by \eqref{b4}.
 By Lemma \ref{lemb1}, we only need 
 to find all $\xi \in \cals$ satisfying \eqref{b5}.
 Given  $\tau \in \R$, 
 $\omega \in \Omega$, $x \in P_nH$ and $\xi \in \cals$, denote by
 $$
 \cali (\xi, x, \omega,  \tau) (t)
 = e^{-At} x
 -\int_t^0 e^{-A (t-s)} P_n 
 \left ( F( \xi (s) + z(\theta_s \omega))
 + g(s+\tau) \right ) ds
 $$
 \be\label{c1}
 +
 \int^t_{-\infty}  e^{-A (t-s)} Q_n 
 \left ( F( \xi (s) + z(\theta_s \omega))
 + g(s+\tau) \right ) ds.
 \ee
 Then $\xi$ satisfies \eqref{b5} if and only if
 $\xi$   is a  fixed point of $\cali$ in $\cals$.
 We first show $\cali$ maps $\cals$ into itsself.
 
 \begin{lem}
 \label{lemb2}
 Suppose  \eqref{f1}, \eqref{g1}  and \eqref{b1}-\eqref{b3}
 hold. Then for every fixed $x\in P_n H$,
 $\omega \in \Omega$ and $\tau \in \R$,
 $\cali(\cdot, x, \omega, \tau): \cals
 \to \cals$ is well-defined.
 \end{lem}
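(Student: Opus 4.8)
The plan is to fix $x \in P_n H$, $\omega \in \Omega$ and $\tau \in \R$, take an arbitrary $\xi \in \cals$, and show that $\eta := \cali(\xi, x, \omega, \tau)$ given by \eqref{c1} defines a continuous map from $(-\infty, 0]$ into $D(A^\alpha)$ with $\| \eta \|_{\cals} < \infty$. A convenient first reduction: since $z$ is tempered, taking $c = \mu$ in the temperedness estimate shows $\sup_{s \le 0} e^{\mu s} \| A^\alpha z(\theta_s \omega) \| < \infty$, so the shifted process $\zeta(s) := z(\theta_s \omega)$ lies in $\cals$; hence $\widetilde\xi := \xi + \zeta \in \cals$ and, writing $R := \| \widetilde\xi \|_{\cals}$, we have $\| A^\alpha \widetilde\xi(s) \| \le R e^{-\mu s}$ for all $s \le 0$. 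By \eqref{f2} this yields the only fact about the nonlinearity I will use, namely $\| F(\widetilde\xi(s)) \| \le L R e^{-\mu s}$; the forcing term is controlled through \eqref{g2} and the elementary inequality $\| g(s+\tau) \| \le \lambda_1^{-\alpha} \| A^\alpha g(s+\tau) \|$. I then estimate the three terms of \eqref{c1} one at a time. In each case the exponential weight $e^{\mu t}$ (which, after collecting exponents, becomes $e^{(\mu - \lambda_n)t}$ for the $P_n$-parts and $e^{(\mu - \lambda_{n+1})t}$ for the $Q_n$-parts) is exactly what compensates the growth coming from $e^{-A(t-s)}$ on $P_n H$ and from the $\cals$-norm growth $e^{-\mu s}$ of $\widetilde\xi$ as $s \to -\infty$; obtaining these bounds \emph{uniformly} in $t \le 0$, not merely their finiteness, is the real content of the lemma.

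For the linear term $e^{-At} x$ with $x \in P_n H$ and $t \le 0$, estimate \eqref{pn1} gives $\| A^\alpha e^{-At} x \| \le \lambda_n^\alpha e^{-\lambda_n t} \| x \|$, so $e^{\mu t} \| A^\alpha e^{-At} x \| \le \lambda_n^\alpha e^{(\mu - \lambda_n) t} \| x \| \le \lambda_n^\alpha \| x \|$ since $\mu > \lambda_n$. For the $P_n$-integral $\int_t^0 e^{-A(t-s)} P_n( F(\widetilde\xi(s)) + g(s+\tau) ) \, ds$ one has $t - s \le 0$ throughout, so \eqref{pn1} applies again and, after inserting the bounds above and multiplying by $e^{\mu t}$, this contribution is dominated by
\be\label{plan-pn}
\lambda_n^\alpha\, e^{(\mu - \lambda_n) t} \left( L R \int_t^0 e^{(\lambda_n - \mu) s} \, ds
+ \lambda_1^{-\alpha} \int_t^0 e^{\lambda_n s} \| A^\alpha g(s+\tau) \| \, ds \right).
\ee
In the first integral $\lambda_n - \mu < 0$, so $\int_t^0 e^{(\lambda_n - \mu) s}\, ds \le (\mu - \lambda_n)^{-1} e^{(\lambda_n - \mu) t}$ and the prefactor $e^{(\mu - \lambda_n) t}$ cancels it; in the second, $\lambda_n \ge \lambda_1$ gives $\int_t^0 e^{\lambda_n s} \| A^\alpha g(s+\tau)\|\, ds \le \int_{-\infty}^0 e^{\lambda_1 s} \| A^\alpha g(s+\tau)\|\, ds < \infty$ by \eqref{g2}, and $e^{(\mu - \lambda_n)t} \le 1$ for $t \le 0$. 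Hence \eqref{plan-pn} is bounded uniformly in $t \le 0$.

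For the $Q_n$-integral $\int_{-\infty}^t e^{-A(t-s)} Q_n( F(\widetilde\xi(s)) + g(s+\tau))\, ds$ the relevant smoothing bounds are \eqref{qn1} and \eqref{qn2}, and finiteness of these integrals has already been shown in \eqref{b20}--\eqref{b21} in the proof of Lemma \ref{lemb1}; what remains is the uniform bound on $e^{\mu t}\| A^\alpha(\cdot)\|$. For the forcing part, using $\| A^\alpha e^{-A(t-s)} Q_n g(s+\tau)\| \le e^{-\lambda_{n+1}(t-s)} \| A^\alpha g(s+\tau)\|$ as in \eqref{b20} and collecting exponents gives $e^{(\mu - \lambda_{n+1}) t} \int_{-\infty}^t e^{\lambda_{n+1} s} \| A^\alpha g(s+\tau)\|\, ds$, which is bounded since $\mu < \lambda_{n+1}$ and, because $\lambda_{n+1} \ge \lambda_1$, that integral is controlled by the finite quantity in \eqref{g2} (with $g \in L^2_{loc}$ handling a neighborhood of $0$). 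For the nonlinear part, \eqref{qn2} together with $\| F(\widetilde\xi(s))\| \le L R e^{-\mu s}$ gives
\be\label{plan-qn}
L R\, e^{(\mu - \lambda_{n+1}) t} \int_{-\infty}^t \left( \alpha^\alpha (t-s)^{-\alpha} + \lambda_{n+1}^\alpha \right) e^{(\lambda_{n+1} - \mu) s} \, ds
= L R \int_0^\infty \left( \alpha^\alpha r^{-\alpha} + \lambda_{n+1}^\alpha \right) e^{-(\lambda_{n+1} - \mu) r} \, dr,
\ee
where the substitution $r = t - s$ makes all dependence on $t$ disappear; the last integral is finite because $\alpha < \tfrac12 < 1$ makes $r^{-\alpha}$ integrable at the origin and $\lambda_{n+1} - \mu > 0$ controls the tail — this is precisely where the constant $c_\alpha$ of \eqref{b1} enters.

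Finally, I would check that $\eta = \cali(\xi,x,\omega,\tau)$ is continuous from $(-\infty,0]$ into $D(A^\alpha)$: the linear term is, being a finite-dimensional entire function of $t$; each of the two integral terms is continuous in $t$ by dominated convergence, using the integrable majorants produced above — for the improper $Q_n$-integral one first splits it at a fixed point $t_0 < 0$ and uses strong continuity of $e^{-A(t-t_0)}$ on $Q_n H$ for the tail. Combining the three estimates yields $\| \eta \|_{\cals} < \infty$, so $\eta \in \cals$ and $\cali(\cdot, x, \omega, \tau) : \cals \to \cals$ is well-defined. I expect the main obstacle to be the bookkeeping of exponents: arranging the weights so that the growth of $e^{-A(t-s)}$ on $P_n H$ and of $\| A^\alpha \widetilde\xi(s)\|$ as $s \to -\infty$ is \emph{exactly} absorbed, which forces $\mu$ to lie strictly between $\lambda_n$ and $\lambda_{n+1}$ — guaranteed by the gap condition \eqref{b1} through the choice \eqref{b3} of $\mu$.
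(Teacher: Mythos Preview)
Your approach is essentially the paper's: estimate each of the three pieces of $\cali(\xi,x,\omega,\tau)(t)$ in the $D(A^\alpha)$-norm, multiply by $e^{\mu t}$, and check the result is bounded uniformly in $t\le 0$. The linear term, the $P_n$-integral, and the nonlinear part of the $Q_n$-integral are handled correctly and match the paper's \eqref{c2}--\eqref{c4}; your explicit check of continuity in $t$ and the remark that temperedness of $z$ puts $z(\theta_\cdot\omega)$ in $\cals$ are nice additions the paper leaves implicit.

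There is one slip in the bookkeeping, precisely of the kind you flagged at the end. For the $Q_n$-forcing contribution you write
\[
e^{(\mu-\lambda_{n+1})t}\int_{-\infty}^t e^{\lambda_{n+1}s}\,\|A^\alpha g(s+\tau)\|\,ds
\]
and claim it is bounded ``since $\mu<\lambda_{n+1}$''. But here $t\le 0$, so the prefactor $e^{(\mu-\lambda_{n+1})t}\ge 1$ and in fact blows up as $t\to-\infty$; the two factors cannot be bounded separately. The fix is the paper's decomposition \eqref{c3}: write $e^{\mu t}e^{-\lambda_{n+1}(t-s)}=e^{(\mu-\lambda_{n+1})(t-s)}e^{\mu s}$, bound the first factor by $1$ (since $t-s\ge 0$ and $\mu<\lambda_{n+1}$), and then use $e^{\mu s}\le e^{\lambda_1 s}$ (from $\mu>\lambda_1$ and $s\le 0$) to invoke \eqref{g2}. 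With this correction your argument goes through and agrees with the paper's proof.
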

 
 \begin{proof}
 By \eqref{c1}  and \eqref{pn1}-\eqref{qn2}
 we have for each $\xi \in \cals$ and $t\le 0$,
 $$
 \|\cali (\xi, x, \omega, \tau) (t)\|_{D(A^\alpha)}
 \le  e^{-\lambda_n t}  \| A^\alpha x \|
  + \int_t^0 
  e^{- \lambda_n (t-s)} 
 \left (\lambda_n^\alpha \| F( \xi  + z(\theta_s \omega))\|
 + \| A^\alpha g(s+\tau)\| \right ) ds
 $$
\be\label{c2}
 +
 \int^t_{-\infty}  
 ({\frac {\alpha^\alpha}{(t-s)^\alpha}} + \lambda_{n+1}^\alpha  )
 e^{- \lambda_{n+1} (t-s)} 
 \| F( \xi  + z(\theta_s \omega)) \| ds
 +\int_{-\infty}^t e^{-\lambda_{n+1} (t-s)} \| A^\alpha g(s+\tau)\| ds.
\ee
 By \eqref{f2}  and \eqref{b3} we have for all $t \le 0$,
 $$
  e^{\mu t}
  \int_t^0 
  e^{- \lambda_n (t-s)} 
 \left (\lambda_n^\alpha \| F( \xi  + z(\theta_s \omega))\|
 + \| A^\alpha g(s+\tau)\| \right ) ds
 $$
 $$
 \le L\lambda^\alpha_n \| \xi + z\|_\cals
 \int_t^0 e^{(\mu -\lambda_n) (t-s)} ds
 +\int_t^0 e^{ (\mu-\lambda_n)t} 
 e^{\lambda_n s} \| A^\alpha g(s+\tau)\|ds
 $$
 \be\label{c5}
 \le
 {\frac {\lambda_n^\alpha L}{\mu-\lambda_n}} \| \xi + z\|_\cals
 + \int_t^0 e^{\lambda_1 s} \| A^\alpha g(s+\tau) \| ds.
 \ee
 Similarly, for all $t \le 0$ we get
 $$
 e^{\mu t} \int^t_{-\infty}  
 ({\frac {\alpha^\alpha}{(t-s)^\alpha}} + \lambda_{n+1}^\alpha  )
 e^{- \lambda_{n+1} (t-s)} 
 \| F( \xi (s) + z(\theta_s \omega)) \| ds
 $$
 $$
 \le
 L \|\xi +  z \|_\cals
  \int^t_{-\infty}  
 ({\frac {\alpha^\alpha}{(t-s)^\alpha}} + \lambda_{n+1}^\alpha  )
 e^{(\mu - \lambda_{n+1}) (t-s)}   ds
 $$
 \be\label{c4}
 \le L (\lambda_{n+1} -\mu)^{-1}
 \left ( \lambda_{n+1}^\alpha + c_\alpha (\lambda_{n+1} -\mu)^\alpha
 \right )
 \|\xi + z \|_\cals.
 \ee
 In addition, by \eqref{g2} and  \eqref{b3} we have for $t \le 0$,
 $$
 e^{\mu t}
 \int_{-\infty}^t e^{-\lambda_{n+1} (t-s)} \| A^\alpha g(s+\tau)\| ds
 \le
 \int_{-\infty}^t e^{(\mu -\lambda_{n+1}) (t-s)}
 e^{\mu s}  \| A^\alpha g(s+\tau)\| ds
 $$
 \be\label{c3}
 \le
 \int_{-\infty}^t  
 e^{\lambda_1  s}  \| A^\alpha g(s+\tau)\| ds
 < \infty.
\ee
Since $\lambda_{n+1}- \mu \le \lambda_{n+1} -\lambda_n$
by \eqref{b3},  it follows from \eqref{c2}-\eqref{c3} that
$$
\| \cali (\xi, x, \omega, \tau)\|_\cals
\le
\left (
{\frac {\lambda_n^\alpha}{\mu - \lambda_n}}
+ {\frac {\lambda_{n+1}^\alpha + c_\alpha (\lambda_{n+1}
-\lambda_n)^\alpha}{\lambda_{n+1} -\mu}}
\right )\| \xi +z \|_\cals  + \|A^\alpha x\|
$$
\be\label{b6-1*}
+ \int_{-\infty}^0  
 e^{\lambda_1  s}  \| A^\alpha g(s+\tau)\| ds
 \le
 k \|\xi  + z\|_\cals 
 + \|A^\alpha x\| 
 +
 \int_{-\infty}^0  
 e^{\lambda_1  s}  \| A^\alpha g(s+\tau)\| ds,
 \ee
   where the last inequality follows from 
   \eqref{b1}. Therefore,  we   have
   $ \cali(\xi, x, \omega, \tau) \in \cals$.
 \end{proof}
 
 We now establish existence of fixed points of $\cali$ in $\cals$.

\begin{lem}
\label{lemb3}
Suppose  \eqref{f1}, \eqref{g1}
and \eqref{b1}-\eqref{b3} hold.
Then for every $x \in P_n H$,
$\omega \in \Omega$ and $\tau \in \R$,
$\cali (\cdot, x, \omega, \tau): \cals
\to \cals$ has a unique fixed point
which is Lipschitz continuous in $x\in P_n H$.
\end{lem}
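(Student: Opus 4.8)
The plan is to obtain the fixed point from the Banach contraction mapping theorem in the complete metric space $\cals$. By Lemma \ref{lemb2}, for each fixed $x\in P_nH$, $\omega\in\Omega$ and $\tau\in\R$ the map $\cali(\cdot,x,\omega,\tau)$ already sends $\cals$ into itself, so the only point to verify is that it is a uniform contraction in the $\xi$-variable. The key observation is that in the defining formula \eqref{c1} the term $e^{-At}x$ and the forcing $g(s+\tau)$ do not depend on $\xi$; hence for $\xi_1,\xi_2\in\cals$ and $t\le 0$,
$$
\cali(\xi_1,x,\omega,\tau)(t)-\cali(\xi_2,x,\omega,\tau)(t)= -\int_t^0 e^{-A(t-s)}P_n\bigl(F(\xi_1(s)+z(\theta_s\omega))-F(\xi_2(s)+z(\theta_s\omega))\bigr)\,ds
$$
$$
+\int_{-\infty}^t e^{-A(t-s)}Q_n\bigl(F(\xi_1(s)+z(\theta_s\omega))-F(\xi_2(s)+z(\theta_s\omega))\bigr)\,ds .
$$
Since $z(\theta_s\omega)$ also cancels inside $F$, I would bound the two integrals exactly as in \eqref{c2}--\eqref{c4}, this time using the Lipschitz estimate $\|F(\xi_1+z)-F(\xi_2+z)\|\le L\|A^\alpha(\xi_1-\xi_2)\|$ from \eqref{f1} in place of \eqref{f2}, and with the $e^{-At}x$ and $g$ contributions simply absent.

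Carrying out these estimates, the $P_n$-integral contributes at most $\frac{\lambda_n^\alpha L}{\mu-\lambda_n}\|\xi_1-\xi_2\|_\cals$ (as in \eqref{c5}) and the $Q_n$-integral at most $L(\lambda_{n+1}-\mu)^{-1}\bigl(\lambda_{n+1}^\alpha+c_\alpha(\lambda_{n+1}-\mu)^\alpha\bigr)\|\xi_1-\xi_2\|_\cals$ (as in \eqref{c4}); using $\lambda_{n+1}-\mu\le\lambda_{n+1}-\lambda_n$ and adding the two gives
$$
\|\cali(\xi_1,x,\omega,\tau)-\cali(\xi_2,x,\omega,\tau)\|_\cals\le L\left(\frac{\lambda_n^\alpha}{\mu-\lambda_n}+\frac{\lambda_{n+1}^\alpha+c_\alpha(\lambda_{n+1}-\lambda_n)^\alpha}{\lambda_{n+1}-\mu}\right)\|\xi_1-\xi_2\|_\cals .
$$
With $\mu$ chosen as in \eqref{b3}, one has $\mu\in(\lambda_n,\lambda_{n+1})$ (so every exponent above has the correct sign), $L\lambda_n^\alpha/(\mu-\lambda_n)=k/2$, and, by the gap condition \eqref{b1}, $\lambda_{n+1}-\mu=\lambda_{n+1}-\lambda_n-\frac{2L}{k}\lambda_n^\alpha\ge\frac{2L}{k}\bigl(\lambda_{n+1}^\alpha+c_\alpha(\lambda_{n+1}-\lambda_n)^\alpha\bigr)$, so the second coefficient, multiplied by $L$, is also $\le k/2$. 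Hence the Lipschitz constant above is $\le k<1$, and $\cali(\cdot,x,\omega,\tau)$ has a unique fixed point $\xi=\xi(\cdot,x,\omega,\tau)\in\cals$.

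For the Lipschitz dependence on $x$, write $\cali(\xi,x,\omega,\tau)(t)=e^{-At}x+\calb(\xi,\omega,\tau)(t)$, where $\calb$ collects the two $\xi$-dependent integrals and is independent of $x$; the estimate just obtained says $\|\calb(\xi_1,\omega,\tau)-\calb(\xi_2,\omega,\tau)\|_\cals\le k\|\xi_1-\xi_2\|_\cals$. If $\xi_i$ denotes the fixed point associated with $x_i$, $i=1,2$, then $\xi_1-\xi_2=e^{-A\cdot}(x_1-x_2)+\calb(\xi_1,\omega,\tau)-\calb(\xi_2,\omega,\tau)$, and since $x_1-x_2\in P_nH$, \eqref{pn1} gives $e^{\mu t}\|A^\alpha e^{-At}(x_1-x_2)\|\le\lambda_n^\alpha e^{(\mu-\lambda_n)t}\|x_1-x_2\|\le\lambda_n^\alpha\|x_1-x_2\|$ for all $t\le 0$, so that $\|e^{-A\cdot}(x_1-x_2)\|_\cals\le\lambda_n^\alpha\|x_1-x_2\|$. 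Combining, $\|\xi_1-\xi_2\|_\cals\le\lambda_n^\alpha\|x_1-x_2\|+k\|\xi_1-\xi_2\|_\cals$, whence $\|\xi_1-\xi_2\|_\cals\le\frac{\lambda_n^\alpha}{1-k}\|x_1-x_2\|$, which is the asserted Lipschitz continuity of $x\mapsto\xi(\cdot,x,\omega,\tau)$, with a constant independent of $\omega$ and $\tau$. The only delicate step is tracking the contraction constant so that it lands exactly on the $k$ of \eqref{b1}: this is precisely where the special value $\mu=\lambda_n+\frac{2L}{k}\lambda_n^\alpha$ is used, splitting the spectral gap so as to absorb both $\lambda_{n+1}^\alpha$ and the $c_\alpha$-term; everything else is the routine chain of exponential integral bounds already carried out in Lemma \ref{lemb2}.
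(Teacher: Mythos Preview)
Your proof is correct and follows essentially the same route as the paper's: both obtain the contraction constant $k$ by redoing the estimates of Lemma \ref{lemb2} with $F(\xi_1+z)-F(\xi_2+z)$ in place of $F(\xi+z)$, and both derive the Lipschitz dependence on $x$ by isolating the $e^{-At}x$ term and absorbing the remaining $k$-contraction. The only cosmetic difference is that you make the splitting $k/2+k/2$ of the contraction constant explicit and measure $x_1-x_2$ in the $H$-norm via \eqref{pn1} (giving the constant $\lambda_n^\alpha/(1-k)$), whereas the paper works directly in the $D(A^\alpha)$-norm on $P_nH$ (giving $1/(1-k)$); on the finite-dimensional space $P_nH$ these are equivalent.
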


\begin{proof}
Given $\xi_1, \xi_2 \in  \cals$, following the proof of Lemma \ref{lemb2},
one can verify 
\be\label{c8}
\| \cali (\xi_1, x, \omega, \tau)
-\cali (\xi_2, x, \omega, \tau)\|_\cals
\le k \| \xi_1 -\xi_2 \|_\cals.
\ee
Since $k\in (0,1)$, by \eqref{c8} we find that
$\cali (\cdot, x, \omega, \tau)$ has a unique fixed
point $\xi^*(x, \omega, \tau)$ in $\cals$. 
For every $t\le 0$,  $\xi^*(x, \cdot, \tau) (t):
\Omega \to D(A^\alpha)$ is measurable   since
it is a limit of iterations of measurable functions
staring at zero.
 In addition, by \eqref{b6-1*} we have
\be\label{c8_a0}
(1-k) \|\xi^*(x, \omega,  \tau) \|_\cals
\le  k  \| z(\theta_{s}  \omega) \|_\cals
+ \| A^\alpha x \|
+ 
 \int_{-\infty}^0  
 e^{\lambda_1  s}  \| A^\alpha g(s+\tau)\| ds.
\ee
 We now prove  the continuity of $\xi^*$ in $x \in P_n H$.
 Let $\xi_1^*
 = \xi^*(x_1, \omega, \tau)$
 and $\xi_2^*=  \xi^*(x_2, \omega,  \tau)$. 
   By  \eqref{b3},   \eqref{c1} and \eqref{c8}
 we have
 $$
 \| \xi_1^* -\xi_2^*\|_\cals
 = \| \cali (\xi_1^*, x_1, \omega, \tau)
 -\cali (\xi_2^*, x_2, \omega, \tau) \|_\cals
 $$
 $$\le
 \| \cali (\xi_1^*, x_1, \omega, \tau)
 -\cali (\xi_1^*, x_2, \omega, \tau) \|_\cals
 +
 \| \cali (\xi_1^*, x_2, \omega, \tau)
 -\cali (\xi_2^*, x_2, \omega, \tau) \|_\cals
 $$
 \be\label{c8_a1}
 \le
 \sup_{t\le 0} e^{ (\mu -\lambda_n) t}
 \|x_1-x_2\|_{D(A^\alpha)}
 + k  \| \xi_1^* -\xi_2^* \|_\cals
 \le
 \|x_1-x_2\|_{D(A^\alpha)}
 + k  \| \xi_1^* -\xi_2^* \|_\cals.
 \ee
 This implies 
   the Lipschitz continuity of $\xi^*$ in $x$.
\end{proof}

By the fixed point $\xi^*$ established by Lemma \ref{lemb3},
for each $\tau \in \R$   and $\omega \in \Omega$, we can define
a mapping
$m(\tau, \omega): P_n H \to Q_n D(A^\alpha)$ by
\be\label{c15}
m(\tau, \omega) (x)
=Q_n \xi^* (x, \omega, \tau) (0)
=  \int^0_{-\infty}  e^{A s } Q_n 
 \left ( F( \xi^*(x, \omega, \tau) (s) + z(\theta_s \omega))
 + g(s+\tau) \right ) ds
\ee
 for $x \in P_n H$.  It follows from \eqref{c8_a1} that
 \be\label{lipm}
 \| m(\tau, \omega) (x_1) -
 m(\tau, \omega) (x_2)\|_{D(A^\alpha)}
 \le {\frac 1{1-k}} \|x_1-x_2\|_{D(A^\alpha)}.
 \ee
 By \eqref{c8_a0} and  \eqref{c15} we get
 \be\label{c16}
 \|m(\tau, \omega) (x) \|_{D(A^\alpha)}
\le  {\frac k {1-k}} \| z(\theta_{s}  \omega) \|_\cals
+ {\frac 1 {1-k}} \| A^\alpha x \|
+ {\frac 1 {1-k}} 
 \int_{-\infty}^0  
 e^{\lambda_1  s}  \| A^\alpha g(s+\tau)\| ds.
\ee
 By  \eqref{b4}, \eqref{c15} and Lemma \ref{lemb1} we
 find that
 for all $\tau \in \R$ and $\omega \in \Omega$,
 \be\label{c17}
 \calm (\tau, \omega)
 =\{ \xi^*(x, \omega, \tau)  (0) : x\in P_n H \}
 = \{ x + m(\tau, \omega)(x): x \in P_n H \}.
 \ee
 Since  $m(\tau, \omega)(x)$ is measurable in $\omega$
 and continuous in $x$,  the measurability of
 $\calm(\tau, \cdot)$ follows.
 
 \begin{lem}
 \label{lemb4}
 Suppose \eqref{f1}, \eqref{g1} and \eqref{b1}-\eqref{b3} hold.
 Then $\calm =\{\calm(\tau, \omega): \tau \in \R,
 \omega \in \Omega \}$ given by
 \eqref{b4} is a Lipschitz invariant manifold of \eqref{a50}.
 \end{lem}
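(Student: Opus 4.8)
The statement asserts conditions (i) and (ii) of Definition \ref{defim} for $\calm$ (asymptotic completeness is not claimed in this lemma), and condition (i) is essentially already in hand, so the plan is to record the graph structure quickly and then concentrate on the invariance identity for the cocycle $\Psi$ attached to \eqref{a50} via \eqref{a40}. For the graph structure I would assemble the consequences of Lemmas \ref{lemb1}--\ref{lemb3}: take the finite-dimensional space $X_1 = P_n H$ and $X_2 = Q_n D(A^\alpha)$ so that $D(A^\alpha) = X_1 \oplus X_2$; let $m(\tau,\omega)$ be the map in \eqref{c15}, for which \eqref{c17} gives $\calm(\tau,\omega) = \{x + m(\tau,\omega)(x) : x \in P_n H\}$, \eqref{lipm} gives Lipschitz continuity in $x$, and measurability in $\omega$ follows because the fixed point $\xi^*$ is a pointwise limit of iterates of measurable maps started at $0$. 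Nothing new is required for condition (i).

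For invariance I would prove $\Psi(t,\tau,\omega,\calm(\tau,\omega)) = \calm(\tau+t,\theta_t\omega)$ for every $t \ge 0$, $\tau \in \R$, $\omega \in \Omega$, using the backward-solution description \eqref{b4} and a time shift. For ``$\subseteq$'': given $u_0 \in \calm(\tau,\omega)$, pick $\xi \in \cals$ solving \eqref{a50} on $(-\infty,0]$ with $\xi(0) = u_0$; concatenating $\xi$ with the forward solution $s \mapsto v(s,0,\omega,g^\tau,u_0)$ and invoking the evolution property of \eqref{a50} yields a continuous complete trajectory $\zeta : \R \to D(A^\alpha)$ with $\zeta|_{(-\infty,0]} = \xi$ and $\zeta(t) = \Psi(t,\tau,\omega,u_0)$. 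Setting $\eta(s) := \zeta(s+t)$ for $s \le 0$, I would check that $\eta \in \cals$ (splitting $(-\infty,0]$ into $(-\infty,-t]$, where $\|\xi\|_{\cals} < \infty$ controls the weighted supremum, and the compact piece $[-t,0]$, where continuity of $\zeta$ on $[0,t]$ suffices) and that $\eta$ is a mild solution on $(-\infty,0]$ of \eqref{a50} with $\tau,\omega$ replaced by $\tau+t,\theta_t\omega$; the latter follows from the translation identity $v(\sigma,\rho,\theta_t\omega,g^{\tau+t},\cdot) = v(\sigma+t,\rho+t,\omega,g^\tau,\cdot)$ together with the group law $\theta_\sigma \circ \theta_t = \theta_{\sigma+t}$ applied inside the nonlinearity $F(\,\cdot + z(\theta_\cdot\omega))$. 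Then $\eta(0) = \zeta(t) = \Psi(t,\tau,\omega,u_0) \in \calm(\tau+t,\theta_t\omega)$. For ``$\supseteq$'': given $w_0 \in \calm(\tau+t,\theta_t\omega)$, represent it by $\eta \in \cals$ solving the shifted equation on $(-\infty,0]$, extend it to a complete trajectory, and put $\xi(s) := \eta(s-t)$ for $s \le 0$; here $s \le 0$ already forces $s-t \le 0$, so only the tail estimate is needed and it costs merely a factor $e^{\mu t}$, giving $\xi \in \cals$, and the translation identity shows $\xi$ is a mild solution of \eqref{a50} on $(-\infty,0]$ with $\Psi(t,\tau,\omega,\xi(0)) = w_0$ and $\xi(0) \in \calm(\tau,\omega)$.

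The only real obstacle is careful bookkeeping: formulating and applying the translation identity relating solutions of \eqref{a50} (forcing $g^\tau$, noise $z(\theta_\cdot\omega)$) to those with forcing $g^{\tau+t}$ and noise $z(\theta_\cdot\theta_t\omega)$, verifying the evolution property that legitimizes the concatenation at time $0$, and confirming that the time-shifted trajectory remains in the weighted space $\cals$. Once these three points are in place, both inclusions are immediate, and together with the graph structure already supplied by Lemmas \ref{lemb1}--\ref{lemb3} this shows that $\calm$ is a Lipschitz invariant manifold of \eqref{a50}.
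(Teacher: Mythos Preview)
Your proposal is correct and follows essentially the same approach as the paper: the paper also records the graph structure via \eqref{c17} and \eqref{lipm}, and for invariance it builds, for ``$\subseteq$'', the piecewise trajectory $\widetilde{\xi}(r) = v(r+t,0,\omega,g^\tau,\xi(0))$ on $[-t,0]$ and $\widetilde{\xi}(r) = \xi(r+t)$ on $(-\infty,-t)$ (your $\eta(s)=\zeta(s+t)$), checks it lies in $\cals$ and solves the $(\tau+t,\theta_t\omega)$-shifted equation via exactly your translation identity, and for ``$\supseteq$'' it sets $\widetilde{\xi}(r)=\xi(r-t)$ (your $\xi(s)=\eta(s-t)$) and uses the same shift to land in $\calm(\tau,\omega)$. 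The only cosmetic difference is that the paper does not explicitly name the translation identity or the concatenated trajectory $\zeta$, but the computations are identical.
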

 
 \begin{proof}
 Given $\tau \in \R$ and $\omega \in \Omega$,
 by \eqref{c17},
 $\calm (\tau, \omega)$ is the graph of 
 $m(\tau, \omega)$ which is Lipschitz continuous
 by Lemma \ref{lemb3}.
 Next, we show the invariance of $\calm$. Given $t>0$
 and $v_0 \in \calm (\tau, \omega)$, by \eqref{b4} there 
 exists $\xi \in \cals$ such that $\xi (0) =v_0$ and $\xi$ is
 a solution of \eqref{a50} on $(-\infty, 0]$. 
Let
 \be\label{c23}
 \widetilde{\xi} (r)
 =
 \left \{
 \begin{array}{ll}
 v(r+t, 0, \omega, g^\tau, \xi (0))  & \ \ \mbox{ if  } \ -t \le r \le 0;\\
 \xi (r+t) & \ \  \mbox{ if } \  \  r<-t.
 \end{array}
 \right .
 \ee
 Note that  $\widetilde{\xi} \in \cals$
 since $\xi \in \cals$.  By straightforward  calculations,
 one can verify, for all $r_2 \le r_1 \le 0$,
 \be\label{c24}
 v(r_1, r_2, \theta_t \omega, g^{\tau +t},  \widetilde{\xi}(r_2))
 = \widetilde{\xi} (r_1),
 \ee
 where 
 $v(r_1, r_2, \theta_t \omega, g^{\tau +t},  \widetilde{\xi}(r_2))$
 is given by \eqref{a52}
 with $\omega$ and $g^\tau$ replaced
 by $\theta_t \omega$ and $g^{\tau +t}$, respectively.
 By \eqref{b4} and \eqref{c24} we find that
 $ \widetilde{\xi} (0) \in \calm (\tau +t, \theta_t \omega)$,
 which along with \eqref{a40}
 and  \eqref{c23} indicates
  that
  $\Psi (t, \tau, \omega, v_0)
  =v(t, 0, \omega, g^\tau, v_0)
  \in \calm (\tau + t,  \theta_t \omega)$
  for all $v_0 \in \calm (\tau, \omega)$,
  and hence
  \be\label{c20}
  \Psi (t, \tau, \omega, \calm (\tau, \omega) )
  \subseteq 
 \calm  (\tau + t,  \theta_t \omega),
 \ \ \mbox{for all } \ t \ge 0.
 \ee
 On the other hand, for every $v_0 \in \calm
 (\tau +t, \theta_t \omega)$, by \eqref{b4} there exists
 $\xi \in \cals$ such that $\xi (0) =v_0$ and
 for all $s_2\le s_1 \le 0$,
 \be\label{c31}
 v(s_1, s_2, \theta_t \omega, g^{\tau +t}, \xi (s_2))
 = \xi (s_1).
 \ee
 Let $\widetilde{\xi}: (-\infty, 0] \to  D(A^\alpha)$ be given by
 \be\label{c32}
 \widetilde{\xi} (r) = \xi (r-t) \ \ \mbox{for all } \  r \le 0.
 \ee
 Then $\widetilde{\xi} \in \cals$, and
 by \eqref{c31}-\eqref{c32},  we have
 for $t\ge 0$ and  for all $r_2 \le r_1 \le 0$,
 \be\label{c33}
 \widetilde{\xi} (r_1)
 =\xi (r_1 -t)
 =v(r_1 -t, r_2 -t, \theta_t \omega, g^{\tau +t}, \xi(r_2 -t))
 =v(r_1, r_2, \omega, g^\tau, \widetilde{\xi} (r_2) ).
 \ee
 By \eqref{c33}, \eqref{c32}    and \eqref{b4} we find that
 $\widetilde{\xi} (0) = \xi (-t) \in \calm (\tau, \omega)$. 
 By \eqref{a40} and \eqref{c31} we get
\be\label{c34}
 \Psi (t, \tau, \omega, \xi (-t))
 =v(t+\tau, \tau, \theta_{-\tau} \omega, g, \xi (-t))
 =v(0, -t, \theta_t \omega, g^{\tau +t}, \xi (-t))=\xi (0).
\ee
Since $\xi (-t) \in \calm (\tau, \omega)$
and $\xi (0) =v_0$ where $v_0$ is an arbitrary given
point in $\calm (\tau +t,  \theta_t \omega)$, by
\eqref{c34} we obtain
\be\label{c35}
 \calm  (\tau + t,  \theta_t \omega)
  \subseteq 
  \Psi (t, \tau, \omega, \calm (\tau, \omega) ),
 \ \ \mbox{for all } \ t \ge 0.
 \ee
 Then the invariance of $\calm$ follows from \eqref{c20} and \eqref{c35}
 immediately.
 \end{proof}
 
 The next result is concerned with the exponential attraction property
 of $\calm$.

 \begin{lem}
 \label{leme1}
 Suppose \eqref{f1}, \eqref{g1} and \eqref{b1}-\eqref{b3} hold
 with $k\in (0, \frac 12)$.
 Then for every  $\tau \in \R$,   $\omega \in \Omega$
 and $v_0 \in D(A^\alpha)$, there
 exists a random variable $v_0^* (\tau, \omega) \in \calm
 (\tau, \omega)$ such that for all $t\ge 0$, 
 $$
 \|\Psi(t,\tau, \omega, v_0^*)
 -\Psi (t, \tau, \omega, v_0)\|_{D(A^\alpha)}
 \le
 {\frac 1{1-\delta}} e^{-\mu t}
 \| Q_n v_0 - m(\tau,\omega) (P_n v_0) \|_{D(A^\alpha)},
 $$
 where $\delta =k + {\frac k{2 -2k}} \in (0,1)$
 for $0<k<{\frac 12}$.
 \end{lem}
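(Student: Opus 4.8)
The plan is to run a Lyapunov--Perron fixed point scheme for the difference of two trajectories of \eqref{a50} (with $g^\tau$), one of which is forced to lie on $\calm$. Fix $\tau\in\R$, $\omega\in\Omega$ and $v_0\in D(A^\alpha)$, and write $v(t)=v(t,0,\omega,g^\tau,v_0)=\Psi(t,\tau,\omega,v_0)$ for the given forward trajectory. Work in the Banach space $\cals^+=\{w\in C([0,\infty),D(A^\alpha)):\ \|w\|_{\cals^+}:=\sup_{t\ge0}e^{\mu t}\|A^\alpha w(t)\|<\infty\}$. For $w\in\cals^+$ put $G_w(s)=F(v(s)+w(s)+z(\theta_s\omega))-F(v(s)+z(\theta_s\omega))$, so that $\|G_w(s)\|\le L\|A^\alpha w(s)\|\le Le^{-\mu s}\|w\|_{\cals^+}$ by \eqref{f1}, set $x_w:=P_nv_0-\int_0^\infty e^{As}P_nG_w(s)\,ds\in P_nH$ (a convergent $D(A^\alpha)$-integral since $\mu>\lambda_n$ and by \eqref{pn1}), and define $\mathcal{T}\colon\cals^+\to\cals^+$ by
$$
\mathcal{T}(w)(t)=e^{-At}\big(m(\tau,\omega)(x_w)-Q_nv_0\big)-\int_t^\infty e^{-A(t-s)}P_nG_w(s)\,ds+\int_0^t e^{-A(t-s)}Q_nG_w(s)\,ds .
$$

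This operator is tailored so that, for $w\in\cals^+$, the identity $\mathcal{T}(w)=w$ is equivalent to: $\bar v:=v+w$ is a mild solution of \eqref{a50} on $[0,\infty)$ with $\bar v(0)\in\calm(\tau,\omega)$. Indeed, if $\bar v$ is such a solution and $w=\bar v-v\in\cals^+$, then $w$ solves $w'+Aw=G_w$; splitting the variation-of-constants formula into its $P_n$- and $Q_n$-parts, letting the lower limit in the $P_n$-part go to $+\infty$ (legitimate because $e^{At}P_n$ grows only like $e^{\lambda_n t}$ while $\|A^\alpha w(t)\|\le e^{-\mu t}\|w\|_{\cals^+}$ with $\mu>\lambda_n$), and using $\bar v(0)\in\calm(\tau,\omega)$ together with \eqref{c17} to identify $Q_nw(0)=m(\tau,\omega)(x_w)-Q_nv_0$, one obtains $w=\mathcal{T}(w)$. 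Conversely, taking $P_n$ and $Q_n$ components of $w=\mathcal{T}(w)$ and applying the semigroup property recovers $w(t)=e^{-A(t-t_0)}w(t_0)+\int_{t_0}^t e^{-A(t-s)}G_w(s)\,ds$ for $0\le t_0\le t$, so $\bar v=v+w$ is a mild solution of \eqref{a50}, and reading $\mathcal{T}(w)$ at $t=0$ gives $P_n\bar v(0)=x_w$, $Q_n\bar v(0)=m(\tau,\omega)(x_w)$, i.e.\ $\bar v(0)\in\calm(\tau,\omega)$ by \eqref{c17}.

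Next I would estimate $\mathcal{T}$, using exactly the bounds \eqref{pn1}--\eqref{qn2}, \eqref{lipm}, the identity $\mu-\lambda_n=\tfrac{2L}{k}\lambda_n^\alpha$ from \eqref{b3}, and the consequence of \eqref{b1} that $\lambda_{n+1}-\mu\ge\tfrac{2L}{k}\big(\lambda_{n+1}^\alpha+c_\alpha(\lambda_{n+1}-\lambda_n)^\alpha\big)$. For $w_1,w_2\in\cals^+$ one has $\|G_{w_1}(s)-G_{w_2}(s)\|\le Le^{-\mu s}\|w_1-w_2\|_{\cals^+}$ and $\|x_{w_1}-x_{w_2}\|_{D(A^\alpha)}\le\tfrac{L\lambda_n^\alpha}{\mu-\lambda_n}\|w_1-w_2\|_{\cals^+}$. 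Multiplying $\mathcal{T}(w_1)(t)-\mathcal{T}(w_2)(t)$ by $e^{\mu t}$, taking $A^\alpha$-norms, using $e^{(\mu-\lambda_{n+1})t}\le1$ to absorb the $e^{-At}Q_n$ factor in the first term, and carrying out the same integral computations as in \eqref{c5} and \eqref{c4}, the three terms contribute respectively at most
$$
\frac{1}{1-k}\cdot\frac{L\lambda_n^\alpha}{\mu-\lambda_n}=\frac{k}{2(1-k)},\qquad \frac{L\lambda_n^\alpha}{\mu-\lambda_n}=\frac{k}{2},\qquad \frac{L\big(\lambda_{n+1}^\alpha+c_\alpha(\lambda_{n+1}-\mu)^\alpha\big)}{\lambda_{n+1}-\mu}\le\frac{k}{2}
$$
times $\|w_1-w_2\|_{\cals^+}$, the first using \eqref{lipm}, the last using $\lambda_{n+1}-\mu\le\lambda_{n+1}-\lambda_n$ and \eqref{b1}. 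Hence $\|\mathcal{T}(w_1)-\mathcal{T}(w_2)\|_{\cals^+}\le\delta\|w_1-w_2\|_{\cals^+}$ with $\delta=k+\tfrac{k}{2-2k}$, and $\delta\in(0,1)$ precisely because $0<k<\tfrac12$. The same bounds (applied to $w$ and $0$) show $\mathcal{T}$ maps $\cals^+$ into itself, so $\mathcal{T}$ has a unique fixed point $w^*=w^*(\tau,\omega,v_0)\in\cals^+$; it is measurable in $\omega$ as the limit of the iterates $\mathcal{T}^j(0)$, which are measurable because $v(\cdot)$ and $m(\tau,\cdot)$ are (cf.\ \eqref{c15}).

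Finally set $v_0^*(\tau,\omega):=\bar v(0)=v_0+w^*(0)\in\calm(\tau,\omega)$. By the equivalence above and uniqueness of mild solutions, $\Psi(t,\tau,\omega,v_0^*)=\bar v(t)=v(t)+w^*(t)$, so $\Psi(t,\tau,\omega,v_0^*)-\Psi(t,\tau,\omega,v_0)=w^*(t)$. Since $G_0\equiv0$ we have $\mathcal{T}(0)(t)=e^{-At}\big(m(\tau,\omega)(P_nv_0)-Q_nv_0\big)$, and because this element lies in $Q_nD(A^\alpha)$ and $\lambda_{n+1}>\mu$, \eqref{qn1} gives $\|\mathcal{T}(0)\|_{\cals^+}\le\|Q_nv_0-m(\tau,\omega)(P_nv_0)\|_{D(A^\alpha)}$. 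Then $\|w^*\|_{\cals^+}\le\|\mathcal{T}(w^*)-\mathcal{T}(0)\|_{\cals^+}+\|\mathcal{T}(0)\|_{\cals^+}\le\delta\|w^*\|_{\cals^+}+\|Q_nv_0-m(\tau,\omega)(P_nv_0)\|_{D(A^\alpha)}$, whence $\|w^*\|_{\cals^+}\le\tfrac{1}{1-\delta}\|Q_nv_0-m(\tau,\omega)(P_nv_0)\|_{D(A^\alpha)}$, and multiplying by $e^{-\mu t}$ yields the asserted estimate. The main obstacle is the constant bookkeeping in the third paragraph: the term coming from the manifold constraint carries the Lipschitz factor $\tfrac1{1-k}$ from \eqref{lipm}, and it is exactly the interplay of this term with the two integral terms that forces the strengthened hypothesis $k<\tfrac12$ (rather than $k<1$, which sufficed merely to construct $\calm$); verifying that the resulting contraction constant is precisely $\delta=k+\tfrac{k}{2-2k}$ is the crux.
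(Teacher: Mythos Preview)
Your proof is correct and follows essentially the same Lyapunov--Perron scheme as the paper: both work in the forward weighted space $\cals^+$, set up the same integral operator on the difference $w=\bar v-v$ (your $\mathcal{T}$ is exactly the paper's $\cali^+$, with your $m(\tau,\omega)(x_w)-Q_nv_0$ being the paper's $y_0$), verify the contraction constant $\delta=k+\tfrac{k}{2-2k}$ via the same three contributions, and read off $v_0^*=v_0+w^*(0)\in\calm(\tau,\omega)$ with the stated decay bound. Your organization is a bit more explicit about the equivalence between the fixed-point equation and the manifold constraint, but the argument is the same.
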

 
 \begin{proof}
 We argue  as in \cite{chu1}.
 Let 
 $\cals^+ =\{ \xi \in C([0, \infty), D(A^\alpha)):
 \sup\limits_{t\ge 0}
 e^{\mu t} \|A^\alpha \xi (t) \| <\infty \}$ with norm
 $\|\xi\|_{\cals^+}
 =\sup\limits_{t \ge 0} e^{\mu t} \| A^\alpha \xi (t) \|$.
 We   will  find  $v_0^*\in \calm (\tau, \omega) $
 such that
 $ v(t, 0, \omega, g^\tau, v_0^*)
 - v(t,0, \omega, g^\tau, v_0) \in \cals^+$.
 To that end, we need to solve the 
  the equation
  $$
 \xi (t)
 = e^{-At} y_0
 +  \int^t_0  e^{-A (t-s)} Q_n 
 \left ( F( \xi (s) + z(\theta_s \omega) + v(s) )
  - F(v(s) + z(\theta_s \omega)) \right ) ds
 $$
 \be\label{d1}
 -
 \int_t^\infty  e^{-A (t-s)} P_n 
 \left ( F( \xi (s) + z(\theta_s \omega) +  v(s)  )
 -F(v(s) + z(\theta_s \omega) ) \right ) ds,
 \ee
 where $v(s) = v(s, 0, \omega, g^\tau, v_0)$ and $y_0$ is 
 determined by
 \be\label{d2}
 y_0 = 
 -Q_n v_0
 + m(\tau, \omega)
 \left (
 P_n v_0
 -\int_0^\infty e^{As} P_n
 (F(\xi +v+ z(\theta_s\omega))-
 F(v+ z(\theta_s \omega))) ds
 \right ).
 \ee
 Given $\xi \in \cals^+$,  denote the right-hand side of
 \eqref{d1} by $\cali^+ (\xi)$. We will  find a fixed point
 of $\cali^+$ in $\cals^+$.  By \eqref{pn1}-\eqref{qn2}
 and \eqref{f1} we get for
 $ t\ge 0$ and $\xi \in \cals^+$,
 $$
 e^{\mu t} \|A^\alpha  \cali^+ (\xi) (t)\|
 \le e^{ (\mu -\lambda_{n+1}) t }  \|A^\alpha y_0\|
 +
  L e^{\mu t}\int_t^\infty
 \lambda_n^\alpha e^{-\lambda_n (t-s)}\| A^\alpha \xi (s) \| ds
 $$
 $$
 +
 Le^{\mu t}\int_0^t
 \left (
 {\frac {\alpha^\alpha}{(t-s)^\alpha}}
 +\lambda_{n+1}^\alpha
 \right ) e^{-\lambda_{n+1} (t-s)} \| A^\alpha \xi (s) \| ds
 $$
 \be\label{d3}
  \le   \|A^\alpha y_0\|
  + L\|\xi\|_{\cals^+}
  \left (
   {\frac {\lambda_n^\alpha}{\mu -\lambda_n}}
   +{\frac {\lambda_{n+1}^\alpha}{\lambda_{n+1} -\mu}}
   + c_\alpha (\lambda_{n+1} - \mu)^{\alpha -1}
  \right ).
 \ee
 By  \eqref{d2},  \eqref{lipm}, \eqref{pn1} and
 \eqref{f1}  we get 
 $$
 \| A^\alpha y_0  \|
 \le \| -Q_n v_0 + m(\tau, \omega) (P_n v_0)\|_{D(A^\alpha)}
 $$
 $$
 +
 \|m(\tau, \omega) (P_n v_0) - m(\tau, \omega)
 \left (
 P_n v_0
 -\int_0^\infty e^{As} P_n
 (F(\xi +v+ z(\theta_s\omega))-
 F(v+ z(\theta_s \omega))) ds
 \right ) \|_{D(A^\alpha)}
 $$
 $$
 \le \| Q_n v_0 - m(\tau, \omega) (P_n v_0)\|_{D(A^\alpha)}
 + {\frac 1{1-k}}
  \int_0^\infty \|e^{As} P_n
 (F(\xi +v+ z)-
 F(v+ z) ) ds \|_{D(A^\alpha)}
 $$
 $$
 \le \| Q_n v_0 - m(\tau, \omega) (P_n v_0)\|_{D(A^\alpha)}
 + {\frac {L\lambda^\alpha_n}{1-k}}
  \int_0^\infty    e^{(\lambda_n -\mu) s} e^{\mu s} \|A^\alpha \xi(s)\|ds
 $$
  \be\label{d4}
 \le \| Q_n v_0 - m(\tau, \omega) (P_n v_0)\|_{D(A^\alpha)}
 + {\frac {L\lambda^\alpha_n}{(1-k) (\mu - \lambda_n)}}
 \| \xi \|_{\cals^+}.
 \ee
 It follows from \eqref{d3}-\eqref{d4} and \eqref{b1}-\eqref{b3} that
 \be\label{d5}
 \| \cali^+ (\xi)\|_{\cals^+}
 \le 
\| Q_n v_0 - m(\tau, \omega) (P_n v_0)\|_{D(A^\alpha)}
$$
$$
+
  L\|\xi\|_{\cals^+}
  \left (
  {\frac {(2-k)\lambda_n^\alpha}{(1-k)(\mu-\lambda_n)}}
   +{\frac {\lambda_{n+1}^\alpha}{\lambda_{n+1} -\mu}}
   + c_\alpha (\lambda_{n+1} - \mu)^{\alpha -1}
  \right )
  $$
  $$
  \le
\| Q_n v_0 - m(\tau, \omega) (P_n v_0)\|_{D(A^\alpha)}
+ \delta \| \xi \|_{\cals^+},
\  \  \mbox{with} \ \ 
\delta = k+ {\frac {k}{2-2k}}.
  \ee
  This shows that $\cali^+$ maps $\cals^+$ into itself.
  Note that for 
  $\xi_1, \xi_2 \in \cals^+$,   we have
  $$
  \| \cali^+ (\xi_1)
  -\cali^+(\xi_2) \|_{D(A^\alpha)}
  \le \| e^{-At}y_{0,1} -e^{-At} y_{0,2} \|_{D(A^\alpha)}
  $$
  $$
  + \int_0^t
  \| e^{-A(t-s)} Q_n (F(\xi_1 +z +v) -F(\xi_2 +z + v))\|_{D(A^\alpha)} ds
  $$
  $$
  + \int_t^\infty
  \| e^{-A(t-s)} P_n (F(\xi_1 +z +v) -F(\xi_2 +z + v))\|_{D(A^\alpha)} ds.
  $$
  Following the proof   of  \eqref{d5} we can get
   \be\label{d6}
   \| \cali^+(\xi_1)
   -\cali^+(\xi_2) \|_{\cals^+}
   \le \delta \| \xi_1 -\xi_2 \|_{\cals^+}
   \ee
   where $\delta$ is given in \eqref{d5}
   and $\delta \in(0,1)$ for $k\in (0, {\frac 12})$.
   Since $\cali^+: \cals^+ \to \cals^+$ is a contraction by \eqref{d6},  it has 
   a unique fixed point $\xi$ in $\cals^+$
   which satisfies \eqref{d1}-\eqref{d2}.
   This fixed point  is measurable  since it can be obtained
   by   a 
   limit of iterations of measurable functions.
   Further, by \eqref{d5} we obtain
   $$\| \xi\|_{\cals^+}
   =\| \cali^+\|_{\cals^+}
   \le   
\| Q_n v_0 - m(\tau, \omega) (P_n v_0)\|_{D(A^\alpha)}
+ \delta \| \xi \|_{\cals^+}
$$
and hence the fixed point $\xi$ satisfies, for all $t\ge 0$,
\be\label{d7}
\| \xi (t)  \|_{D(A^\alpha)} \le  {\frac 1{1-\delta}} e^{-\mu t} 
 \| Q_n v_0 - m(\tau, \omega) (P_n v_0)\|_{D(A^\alpha)}.
 \ee
   By \eqref{d1} we have
   \be\label{d8}
  \xi (0)
  =y_0 -
  \int_0^\infty  e^{A  s} P_n 
 \left ( F( \xi  + z  +  v  )
 -F(v  + z  ) \right ) ds.
 \ee
 By \eqref{d1} and \eqref{d8} we get
 \be\label{d10}
 \xi (t)
 =e^{-At} \xi (0)
 +
  \int_0^t   e^{-A (t-s)} 
 \left ( F( \xi   + z  +  v  )
 -F(v  + z  ) \right ) ds.
 \ee
  Note that $v(t,0, \omega, g^\tau, v_0)$ is a solution of  \eqref{a50}
  with $r =0$,  which along with \eqref{d10} implies
  that $v^*(t) = \xi (t) + v(t, 0, \omega, g^\tau, v_0)$ satisfies
 $$
  v^*(t)
  = e^{-At} v^*(0) +
  \int^t_0 e^{-A(t-s)} ( F(v^*(s) + z(\theta_s \omega)) + g(s+\tau) ) ds
  \ \ \mbox{with} \ v^*(0) = \xi (0) + v_0.
$$
This shows that $v^*(t)$ is  a solution of \eqref{a50} with initial condition
$v^*(0)$.  By the uniqueness of solutions, we have
$v^*(t) = v(t, 0, \omega, g^\tau, v^*(0) )$.
Since $\xi (t) = v(t, 0, \omega, g^\tau, v^*(0) )
-v(t, 0, \omega, g^\tau, v_0 )$, by \eqref{d7} we get
for all $t\ge 0$,
\be\label{d20}
\|v(t, 0, \omega, g^\tau, v^*(0) )
-v(t, 0, \omega, g^\tau, v_0 )\|_{D(A^\alpha)}
\le    {\frac 1{1-\delta}} e^{-\mu t} 
 \| Q_n v_0 - m(\tau, \omega) (P_n v_0)\|_{D(A^\alpha)}.
 \ee
By \eqref{d8} and \eqref{d2} we  get
$
v^*(0) = 
\xi (0) + v_0
=x_0 + m(\tau, \omega)  (x_0)   \in \calm(\tau, \omega)$ where
$x_0 = 
 P_n v_0
 -\int_0^\infty e^{As} P_n
 (F(v^*+ z )-
 F(v+ z ) ) ds$, which along with \eqref{d20} completes   the proof.
 \end{proof}

As an immediate consequence of Lemmas \ref{lemb4} and \ref{leme1},
we obtain the existence of inertial manifolds for equation
\eqref{a20}.

 \begin{cor}
 \label{imv}
 Suppose \eqref{f1}, \eqref{g1} and \eqref{b1}-\eqref{b3} hold
 with $k\in (0, \frac 12)$. Then the cocycle $\Psi$ defined by \eqref{a40}
 for equation \eqref{a20} has an inertial manifold 
 $\calm =\{ \calm(\tau, \omega): \tau \in \R,
 \omega \in \Omega \}$ as given by
 \eqref{b4}  and \eqref{c17}.
 \end{cor}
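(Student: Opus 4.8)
The plan is to verify directly that the family $\calm=\{\calm(\tau,\omega):\tau\in\R,\ \omega\in\Omega\}$ defined by \eqref{b4} satisfies the three requirements of Definition \ref{defim} for the cocycle $\Psi$, with the underlying separable Banach space taken to be $X=D(A^\alpha)$ (equipped with the norm $\|A^\alpha\cdot\|$), the finite-dimensional subspace $X_1=P_nH=\mathrm{span}\{e_1,\dots,e_n\}$, and its complement $X_2=Q_nD(A^\alpha)$, so that $X=X_1\oplus X_2$. Essentially all of the analytic work has already been carried out in Lemmas \ref{lemb3}, \ref{lemb4} and \ref{leme1}, and the corollary merely collects these facts.

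For condition (i), the representation \eqref{c17} identifies $\calm(\tau,\omega)$ as exactly the graph over $P_nH$ of the map $m(\tau,\omega):P_nH\to Q_nD(A^\alpha)$ constructed in \eqref{c15}. By \eqref{lipm} this map is Lipschitz continuous in $x\in P_nH$ with constant $1/(1-k)$, and by the measurability of the fixed point $\xi^*$ established in Lemma \ref{lemb3} together with the remark following \eqref{c17}, $m(\tau,\omega)(x)$ is measurable in $\omega\in\Omega$ for each fixed $\tau$. Since $P_nH$ is finite dimensional and $m(\tau,\omega)$ is continuous, each $\calm(\tau,\omega)$ is a closed subset of $D(A^\alpha)$, and the family $\calm$ is measurable in $\omega$ for every fixed $\tau$; this establishes (i).

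Condition (ii), the invariance $\Psi(t,\tau,\omega,\calm(\tau,\omega))=\calm(\tau+t,\theta_t\omega)$ for all $t\ge0$, $\tau\in\R$ and $\omega\in\Omega$, is precisely the conclusion of Lemma \ref{lemb4}. For condition (iii), fix $\tau\in\R$, $\omega\in\Omega$ and $v_0\in D(A^\alpha)$, and let $v_0^*=v_0^*(\tau,\omega)\in\calm(\tau,\omega)$ be the random variable produced by Lemma \ref{leme1}; here we invoke the standing hypothesis $k\in(0,\tfrac12)$, which guarantees $\delta=k+\tfrac{k}{2-2k}\in(0,1)$. Then Lemma \ref{leme1} yields, for all $t\ge0$,
\[
\|\Psi(t,\tau,\omega,v_0^*)-\Psi(t,\tau,\omega,v_0)\|_{D(A^\alpha)}\le c_1\,e^{-c_2 t},
\]
where $c_2=\mu>0$ and $c_1=\tfrac{1}{1-\delta}\,\|Q_nv_0-m(\tau,\omega)(P_nv_0)\|_{D(A^\alpha)}>0$ depend on $\tau$, $\omega$ and $v_0$; this is exactly the asymptotic completeness demanded in (iii). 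Having checked (i)--(iii), $\calm$ is an inertial manifold of $\Psi$ for equation \eqref{a20}.

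There is no genuine obstacle in this step, as it is a bookkeeping assembly; the real content lies in the contraction estimates of Lemmas \ref{lemb2}--\ref{leme1}. The only points worth a moment's care are that the norm in Definition \ref{defim} is consistently read as the $D(A^\alpha)$-norm throughout, and that the stronger smallness condition $k<\tfrac12$ (rather than merely $k<1$, which already suffices for the invariance in Lemma \ref{lemb4}) is exactly what makes $\delta<1$ and hence permits the application of Lemma \ref{leme1}.
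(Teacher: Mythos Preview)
Your proof is correct and takes essentially the same approach as the paper, which simply states that the corollary is an immediate consequence of Lemmas \ref{lemb4} and \ref{leme1}; you have merely spelled out explicitly which lemma verifies which clause of Definition \ref{defim}. One tiny quibble: your claim that $c_1>0$ can fail when $v_0\in\calm(\tau,\omega)$ already (then $c_1=0$), but of course the exponential bound then holds trivially with any positive constant, so this does not affect the argument.
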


Based on Corollary \ref{imv}, we are able to establish
the existence of inertial manifolds for the non-autonomous
stochastic equation \eqref{pde1}.

\begin{thm}
 \label{imu}
 Suppose \eqref{f1}, \eqref{g1} and \eqref{b1}-\eqref{b3} hold
 with $k\in (0, \frac 12)$. Then the cocycle $\Phi$ defined by \eqref{a41}
 for equation \eqref{pde1} has an inertial manifold 
 $\widetilde{\calm} =\{ \widetilde{\calm} (\tau, \omega): \tau \in \R,
 \omega \in \Omega \}$ which is given by,
  for each $\tau\in \R$
 and $\omega \in \Omega$,
 \be\label{imu_1}
 \widetilde{\calm} (\tau, \omega)
 = \{ x+ z( \omega) +  m(\tau, \omega) (x):
 x\in  P_n H \}
 =\calm (\tau, \omega) + z(\omega).
 \ee
 \end{thm}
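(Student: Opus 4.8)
The plan is to deduce everything from the conjugacy identity \eqref{a42}, namely $\Phi(t,\tau,\omega,u_\tau) = \Psi(t,\tau,\omega,u_\tau - z(\omega)) + z(\theta_t\omega)$, together with the inertial manifold $\calm$ for the random equation \eqref{a20} produced in Corollary \ref{imv} and the measurability, continuity and temperedness of the stationary solution $z$. Since $\Phi$ has already been shown to be a continuous cocycle on $D(A^\alpha)$ right after \eqref{a41}, it remains only to verify the three conditions of Definition \ref{defim} for $\widetilde{\calm}$ with $X = D(A^\alpha)$, $X_1 = P_n H$ and $X_2 = Q_n D(A^\alpha)$.

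\emph{Graph structure.} The one genuinely new point is that $z(\omega)$ need not lie in $Q_n D(A^\alpha)$, so \eqref{imu_1} is not yet displayed as a graph over $P_n H$. I would absorb the $P_n$-component of $z(\omega)$ into the base variable: setting $y = x + P_n z(\omega)$ for $x\in P_n H$ and using that $Q_n$ commutes with $A^\alpha$ (so $Q_n z(\omega)\in Q_n D(A^\alpha)$), \eqref{imu_1} becomes
\be
\widetilde{\calm}(\tau,\omega) = \{ y + \widetilde m(\tau,\omega)(y): y \in P_n H \}, \qquad \widetilde m(\tau,\omega)(y) := Q_n z(\omega) + m(\tau,\omega)(y - P_n z(\omega)).
\ee
By \eqref{lipm}, $\widetilde m(\tau,\omega)$ is Lipschitz in $y$ with constant $1/(1-k)$, and it is measurable in $\omega$ since $m(\tau,\cdot)$ and $z(\cdot)$ are; hence, exactly as for $\calm$ after \eqref{c17}, $\widetilde{\calm}(\tau,\cdot)$ is a measurable family and Definition \ref{defim}(i) holds.

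\emph{Invariance.} Writing $u_\tau = v_0 + z(\omega)$ with $v_0\in\calm(\tau,\omega)$, \eqref{a42} gives $\Phi(t,\tau,\omega,u_\tau) = \Psi(t,\tau,\omega,v_0) + z(\theta_t\omega)$. Since $\calm$ is invariant by Lemma \ref{lemb4}, i.e. $\Psi(t,\tau,\omega,\calm(\tau,\omega)) = \calm(\tau+t,\theta_t\omega)$, it follows that
\be
\Phi(t,\tau,\omega,\widetilde{\calm}(\tau,\omega)) = \Psi(t,\tau,\omega,\calm(\tau,\omega)) + z(\theta_t\omega) = \calm(\tau+t,\theta_t\omega) + z(\theta_t\omega) = \widetilde{\calm}(\tau+t,\theta_t\omega),
\ee
which is Definition \ref{defim}(ii).

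\emph{Asymptotic completeness.} Given $u_\tau \in D(A^\alpha)$, put $v_0 = u_\tau - z(\omega) \in D(A^\alpha)$ and let $v_0^* = v_0^*(\tau,\omega)\in\calm(\tau,\omega)$ be the point furnished by Lemma \ref{leme1}. Then $u_0^* := v_0^* + z(\omega)\in\widetilde{\calm}(\tau,\omega)$, and since the $z(\theta_t\omega)$ terms cancel in \eqref{a42}, $\Phi(t,\tau,\omega,u_\tau) - \Phi(t,\tau,\omega,u_0^*) = \Psi(t,\tau,\omega,v_0) - \Psi(t,\tau,\omega,v_0^*)$, so Lemma \ref{leme1} yields, for all $t\ge 0$,
\be
\| \Phi(t,\tau,\omega,u_\tau) - \Phi(t,\tau,\omega,u_0^*) \|_{D(A^\alpha)} \le \frac{1}{1-\delta}\, e^{-\mu t}\, \| Q_n v_0 - m(\tau,\omega)(P_n v_0) \|_{D(A^\alpha)},
\ee
which is precisely the bound $c_1 e^{-c_2 t}$ of Definition \ref{defim}(iii) with $c_2 = \mu > 0$ and $c_1$ depending on $\tau,\omega,u_\tau$ through $v_0 = u_\tau - z(\omega)$. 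This shows $\widetilde{\calm}$ is an inertial manifold of $\Phi$. I expect no real obstacle here: the only step that deserves attention is the reparametrization above, which is what exhibits $\widetilde{\calm}$ as a genuine Lipschitz graph over $P_n H$; every other property follows mechanically from \eqref{a42} and Corollary \ref{imv}.
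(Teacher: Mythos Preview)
Your proposal is correct and follows essentially the same approach as the paper: the reparametrization $y = x + P_n z(\omega)$ leading to $\widetilde m(\tau,\omega)(y) = Q_n z(\omega) + m(\tau,\omega)(y - P_n z(\omega))$, the invariance via \eqref{a42} and Lemma \ref{lemb4}, and the asymptotic completeness via \eqref{a42} and Lemma \ref{leme1} are exactly what the paper does. The only cosmetic difference is that the paper further rewrites the exponential bound as $\|Q_n u_0 - \widetilde m(\tau,\omega)(P_n u_0)\|_{D(A^\alpha)}$, which is precisely your $\|Q_n v_0 - m(\tau,\omega)(P_n v_0)\|_{D(A^\alpha)}$ after substituting $v_0 = u_0 - z(\omega)$.
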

 
 \begin{proof}
  Given  $\tau \in \R$,
  $\omega \in \Omega$ and
  $x \in  P_n H$, let $y= x+ P_n z( \omega)$. 
  Then   we have
  $$
  x+ z( \omega) +  m(\tau, \omega) (x)
  =  y + Q_n z( \omega) + m(\tau, \omega) 
  (y-P_n z( \omega)),
  $$
  which along with \eqref{imu_1} implies
   \be\label{imu_2}
 \widetilde{\calm} (\tau, \omega)
 = \{  y + Q_n z( \omega) + m(\tau, \omega) 
  (y-P_n z( \omega) ): 
 y \in  P_n H \}
 = \{ y +\widetilde{m} (\tau, \omega) (y): y \in P_n H \}
 \ee
 where 
 \be\label{imu_3}
 \widetilde{m} (\tau, \omega)
 (\cdot) =   
 Q_n z( \omega) + m(\tau, \omega) 
  ( \cdot -P_n z( \omega) )
  \ee
    is  a Lipschitz map  from
  $P_nH$ to $Q_n H$.  
  By \eqref{a42}, \eqref{imu_1}   and  the invariance of $\calm$ under $\Psi$ we get
  $$
  \Phi (t, \tau, \omega, \widetilde{\calm} (\tau, \omega))
  = \{ \Phi (t, \tau, \omega,
  x+ z(\omega) + m(\tau, \omega) (x): x  \in P_n H \}
  $$
  $$
  = \{ \Psi  (t, \tau, \omega,
  x+  m(\tau, \omega) (x)  + z(\theta_t \omega):  x  \in P_n H \}
  =
  \Psi (t, \tau, \omega, \calm (\tau, \omega) ) + z(\theta_t \omega)
  $$
 \be\label{imu_5}
  =
  \calm (\tau +t, \theta_t \omega)  + z(\theta_t \omega)
  =\widetilde{\calm} (\tau +t, \theta_t \omega) ,
  \ee
  where the last equality follows from \eqref{imu_1} again.
  Therefore, $\widetilde{\calm}$ is invariant under $\Phi$.
  Finally, we prove the attraction property of $\widetilde{\calm}$.
  Given $u_0 \in D(A^\alpha)$, let $v_0 = u_0 -z(\omega)$.
  By Lemma \ref{leme1}, there exists  
   a random variable $v_0^* (\tau, \omega) \in \calm
 (\tau, \omega)$ such that for all $t\ge 0$, 
 \be\label{imu_10}
 \|\Psi(t,\tau, \omega, v_0^*)
 -\Psi (t, \tau, \omega, v_0)\|_{D(A^\alpha)}
 \le
 {\frac 1{1-\delta}} e^{-\mu t}
 \| Q_n v_0 - m(\tau,\omega) (P_n v_0) \|_{D(A^\alpha)}.
\ee
 Let $u_0^* = v_0^* + z(\omega)$.
 Since $v_0^*   \in \calm
 (\tau, \omega)$, by \eqref{imu_1} we  find 
 $u_0^* \in \widetilde{\calm}(\tau, \omega)$.
 By \eqref{a42}  we get
$$
 \|\Phi(t,\tau, \omega, u_0^*)
 -\Phi (t, \tau, \omega, u_0)\|_{D(A^\alpha)}
=   \|\Psi(t,\tau, \omega, v_0^*)
 -\Psi (t, \tau, \omega, v_0)\|_{D(A^\alpha)}
$$
which along with \eqref{imu_10} yields
\be\label{imu_20}
 \|\Phi(t,\tau, \omega, u_0^*)
 -\Phi (t, \tau, \omega, u_0)\|_{D(A^\alpha)}
 \le
 {\frac 1{1-\delta}} e^{-\mu t}
  \| Q_n u_0    - \widetilde{m} (\tau,\omega) (P_n u_0)\|_{D(A^\alpha)}.
\ee
By \eqref{imu_2}, \eqref{imu_5} and \eqref{imu_20} we conclude
the proof. 
 \end{proof}
 
 We now establish relations between tempered  random
 attractors and inertial  manifolds.
 Recall that a tempered family 
  $\cala=\{ \cala(\tau, \omega): \tau \in \R, \omega 
 \in \Omega \}$ of nonempty compact subsets of   $D(A^\alpha)$
 is called a tempered random pullback attractor of $\Phi$
 if $\cala$ is measurable, invariant, and pullback attracts
 all tempered  family of bounded subsets of $D(A^\alpha)$
 (see, e.g., \cite{wan1} and the references therein).
 Based on this notation, the random inertial manifold
 constructed in Theorem \ref{imu}  must contain
 tempered random attractors as subsets.
 
\begin{thm}
 \label{imatt}
 Suppose \eqref{f1}, \eqref{g1} and \eqref{b1}-\eqref{b3} hold
 with $k\in (0, \frac 12)$. If  $\Phi$ 
 has a tempered pullback random attractor
 $\cala=\{ \cala(\tau, \omega): \tau \in \R, \omega 
 \in \Omega \}$ in $D(A^\alpha)$,
  then we  have
 $\cala (\tau, \omega) \subseteq \widetilde{\calm}(\tau, \omega)$
 for all $\tau \in \R$  and $\omega \in \Omega$, where 
 $\widetilde{\calm}(\tau, \omega)$ is given by \eqref{imu_1}.
 \end{thm}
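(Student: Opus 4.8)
The plan is to show $\cala(\tau,\omega)\subseteq\widetilde{\calm}(\tau,\omega)$ by a pullback argument: I push the tempered pre\-images of a point of $\cala$ into the exponential tracking estimate established in the proof of Theorem \ref{imu}. Fix $\tau\in\R$, $\omega\in\Omega$ and $a\in\cala(\tau,\omega)$. For each $s>0$, the invariance of $\cala$ gives $\cala(\tau,\omega)=\Phi(s,\tau-s,\theta_{-s}\omega,\cala(\tau-s,\theta_{-s}\omega))$, so I can choose $a_{-s}\in\cala(\tau-s,\theta_{-s}\omega)$ with $\Phi(s,\tau-s,\theta_{-s}\omega,a_{-s})=a$. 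Applying \eqref{imu_20} at the pair $(\tau-s,\theta_{-s}\omega)$ with initial datum $u_0=a_{-s}$ produces $b_{-s}\in\widetilde{\calm}(\tau-s,\theta_{-s}\omega)$ such that for all $t\ge 0$,
\[
\|\Phi(t,\tau-s,\theta_{-s}\omega,b_{-s})-\Phi(t,\tau-s,\theta_{-s}\omega,a_{-s})\|_{D(A^\alpha)}\le\frac{1}{1-\delta}\,e^{-\mu t}\,\|Q_na_{-s}-\widetilde{m}(\tau-s,\theta_{-s}\omega)(P_na_{-s})\|_{D(A^\alpha)}.
\]
Taking $t=s$ and using $\Phi(s,\tau-s,\theta_{-s}\omega,a_{-s})=a$ together with $\Phi(s,\tau-s,\theta_{-s}\omega,b_{-s})\in\widetilde{\calm}(\tau,\omega)$ (by the invariance of $\widetilde{\calm}$ from Theorem \ref{imu}), I obtain
\[
\operatorname{dist}_{D(A^\alpha)}\!\big(a,\widetilde{\calm}(\tau,\omega)\big)\le\frac{1}{1-\delta}\,e^{-\mu s}\,\|Q_na_{-s}-\widetilde{m}(\tau-s,\theta_{-s}\omega)(P_na_{-s})\|_{D(A^\alpha)},\qquad s>0.
\]

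Next I would let $s\to\infty$ and prove the right-hand side tends to $0$. Since $a_{-s}\in\cala(\tau-s,\theta_{-s}\omega)$ and $\cala$ is tempered, $\|A^\alpha a_{-s}\|\le\|\cala(\tau-s,\theta_{-s}\omega)\|_{D(A^\alpha)}=o(e^{\mu s})$, so the $Q_na_{-s}$-term and, after invoking the Lipschitz estimate \eqref{lipm} for $\widetilde{m}$, the $P_na_{-s}$-contribution are killed by $e^{-\mu s}$. By \eqref{imu_3} and \eqref{lipm} it then remains to bound $\|\widetilde{m}(\tau-s,\theta_{-s}\omega)(0)\|_{D(A^\alpha)}\le\|A^\alpha z(\theta_{-s}\omega)\|+\|m(\tau-s,\theta_{-s}\omega)(-P_nz(\theta_{-s}\omega))\|_{D(A^\alpha)}$, and \eqref{c16} bounds the last term by a fixed multiple of $\|A^\alpha z(\theta_{-s}\omega)\|+\|z(\theta_{\cdot}\theta_{-s}\omega)\|_{\cals}+\int_{-\infty}^0 e^{\lambda_1 r}\|A^\alpha g(r+\tau-s)\|\,dr$. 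Temperedness of $z$ yields $e^{-\mu s}\|A^\alpha z(\theta_{-s}\omega)\|\to 0$ and $e^{-\mu s}\|z(\theta_{\cdot}\theta_{-s}\omega)\|_{\cals}=\sup_{r\le -s}e^{\mu r}\|A^\alpha z(\theta_r\omega)\|\to 0$, while the substitution $p=r+\tau-s$ rewrites $e^{-\mu s}\int_{-\infty}^0 e^{\lambda_1 r}\|A^\alpha g(r+\tau-s)\|\,dr=e^{(\lambda_1-\mu)s-\lambda_1\tau}\int_{-\infty}^{\tau-s}e^{\lambda_1 p}\|A^\alpha g(p)\|\,dp$, which tends to $0$ because $\mu>\lambda_n\ge\lambda_1$ (by \eqref{b3}) and the integral is the tail of the convergent integral \eqref{g1}. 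Collecting these estimates, $\operatorname{dist}_{D(A^\alpha)}(a,\widetilde{\calm}(\tau,\omega))=0$.

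Finally, I would observe that by \eqref{imu_2} the fiber $\widetilde{\calm}(\tau,\omega)$ is the graph over the whole (hence closed) finite-dimensional subspace $P_nH$ of the Lipschitz, in particular continuous, map $\widetilde{m}(\tau,\omega):P_nH\to Q_nH$, so it is a closed subset of $D(A^\alpha)$; a point at zero distance from a closed set belongs to it, hence $a\in\widetilde{\calm}(\tau,\omega)$. Since $a\in\cala(\tau,\omega)$, $\tau$ and $\omega$ were arbitrary, this gives $\cala(\tau,\omega)\subseteq\widetilde{\calm}(\tau,\omega)$ for all $\tau\in\R$ and $\omega\in\Omega$.

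The step I expect to be the main obstacle is controlling, as $s\to\infty$, the growth in $s$ of $\|Q_na_{-s}-\widetilde{m}(\tau-s,\theta_{-s}\omega)(P_na_{-s})\|_{D(A^\alpha)}$ so that it is dominated by $e^{-\mu s}$: the pre-image term is handled by temperedness of $\cala$, the $z$-part of $\widetilde{m}(\cdot)(0)$ by temperedness of $z$, and the genuinely delicate piece is the external-forcing contribution that enters through \eqref{c16}. It is exactly the gap-condition consequence $\mu>\lambda_1$ that lets $e^{-\mu s}$ overcome the factor $e^{\lambda_1 s}$ produced by translating $g$ by $-s$, so that the integrability hypothesis \eqref{g1} can finally be invoked.
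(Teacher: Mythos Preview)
Your proof is correct and follows essentially the same pullback-tracking argument as the paper: pull $a$ back along the invariant attractor, apply the exponential tracking estimate \eqref{imu_20} at time $\tau-s$, push forward by invariance of $\widetilde{\calm}$, and show the error vanishes using temperedness of $\cala$, temperedness of $z$, and the integrability \eqref{g1} of $g$. The only cosmetic difference is in the final step: the paper writes the tracked point explicitly as $x_m+\widetilde{m}(\tau,\omega)(x_m)$ and passes to the limit in each component using continuity of $\widetilde{m}$, whereas you phrase it as $\operatorname{dist}(a,\widetilde{\calm}(\tau,\omega))=0$ and invoke closedness of the Lipschitz graph---these are equivalent.
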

 
 \begin{proof}
 Given $u \in \cala(\tau, \omega)$ and $t_m \to \infty$, by the 
 invariance  of $\cala$, we find that for every $ m \in \N$, there
 exists $u_m \in \cala (\tau -t_m,  \theta_{-t_m} \omega)$ such that
 \be\label{imatt_1}
 u = \Phi (t_m, \tau - t_m, \theta_{-t_m} \omega, u_m ).
 \ee
 By \eqref{imu_20},  there exists
 $u^*_m \in \widetilde{\calm} (\tau- t_{m},
 \theta_{-t_m} \omega )$
 such that
 \be\label{imatt_2}
 \| \Phi (t_m, \tau - t_m, \theta_{-t_m} \omega, u^*_m)
 -
 \Phi (t_m, \tau - t_m, \theta_{-t_m} \omega, u_m)\|_{D(A^\alpha)}
 $$
 $$
 \le 
 {\frac 1{1-\delta}}
 e^{-\mu t_m}
 \| Q_n u_m -\widetilde{m}(\tau -t_m, \theta_{-t_m} \omega) (P_n u_m)
 \|_{D(A^\alpha)}.
 \ee
 Since  $u^*_m \in \widetilde{\calm} (\tau- t_{m},
 \theta_{-t_m} \omega )$, by the invariance of $\widetilde{\calm}$ we 
 have $ \Phi (t_m, \tau - t_m, \theta_{-t_m} \omega, u^*_m)$
 $ \in \widetilde{\calm} (\tau, \omega)$.
  By \eqref{imu_2}
  we may write
  $\Phi (t_m, \tau - t_m, \theta_{-t_m} \omega, u^*_m)
  =x_m + \widetilde{m} (\tau, \omega) (x_m)$
  for some $x_m \in P_n H$.
  This along with   
  \eqref{imatt_1}-\eqref{imatt_2}
   implies
  \be\label{imatt_3}
  \|( x_m-P_n u) + ( \widetilde{m} (\tau, \omega) (x_m) -Q_n u) \|_{D(A^\alpha)}
  \le 
 {\frac 1{1-\delta}}
 e^{-\mu t_m}
 \| Q_n u_m -\widetilde{m}(\tau -t_m, \theta_{-t_m} \omega) (P_n u_m)
 \|_{D(A^\alpha)}.
 \ee
  By   \eqref{imu_3}, \eqref{c16}
   and $u_m \in \cala (\tau -t_m,  \theta_{-t_m} \omega)$ we get
 $$
 \| Q_n u_m -\widetilde{m}(\tau -t_m, \theta_{-t_m} \omega) (P_n u_m)
 \|_{D(A^\alpha)}
 \le
   \| u_m \| _{D(A^\alpha)}
 + \|  \widetilde{m}(\tau -t_m, \theta_{-t_m} \omega) (P_n u_m)
 \|_{D(A^\alpha)}
  $$
  $$
 \le 
  \| u_m \| _{D(A^\alpha)}
  + \| A^\alpha z(\theta_{-t_m} \omega)\|
 + \|  {m}(\tau -t_m, \theta_{-t_m} \omega) (P_n (u_m - z(\theta_{-t_m} \omega)))
 \|_{D(A^\alpha)}
 $$
  $$
 \le 
  {\frac {2-k}{1-k}} \| u_m \| _{D(A^\alpha)}
   + {\frac {2-k}{1-k}} \| A^\alpha z(\theta_{-t_m} \omega)\|
 $$
 $$
 + {\frac {k}{1-k}} \sup_{r\le 0} e^{\mu r} \|z(\theta_{r-t_m} \omega)\|_{D(A^\alpha)}
 + {\frac 1{1-k}}
 \int_{-\infty}^0
 e^{\lambda_1 s}\|g(s+\tau -t_m)\|_{D(A^\alpha)} ds
 $$
  $$
 \le 
    {\frac {2-k}{1-k}} \| \cala(\tau - t_m, \theta_{-t_m} \omega ) \| _{D(A^\alpha)}
  +   {\frac {2-k}{1-k}}  \|  z(\theta_{-t_m} \omega)\|_{D(A^\alpha)}
 $$
 \be\label{imatt_5}
 + {\frac {k}{1-k}} e^{\mu t_m}  \sup_{s\le -t_m} e^{\mu s}
  \|z(\theta_{s} \omega)\|_{D(A^\alpha)}
 + { \frac {e^{\mu t_m}}{1-k}}  \int_{-\infty}^{-t_m}
 e^{\lambda_1 r}\|g(r+\tau )\|_{D(A^\alpha)} ds.
\ee
 Taking the limit of \eqref{imatt_3} as $m \to \infty$,
 by \eqref{imatt_5}, \eqref{g2}
 and the temperedness of  $\cala$  and $z$, we get
 $$
 \lim_{m \to \infty}
  \|( x_m-P_n u) + ( \widetilde{m} (\tau, \omega) (x_m) -Q_n u) \|_{D(A^\alpha)}
 =0$$
 which implies
 $x_m \to P_n u$  and
 $\widetilde{m} (\tau, \omega) (x_m) \to Q_n u$.
 By the continuity of $\widetilde{m} (\tau, \omega)$ we obtain
 $Q_n u = \widetilde{m} (\tau, \omega) (P_n u)$
 and hence $u \in \widetilde{\calm} (\tau, \omega)$
 for any $u \in \cala (\tau, \omega)$ as desired.
  \end{proof}

\section{Periodicity and Almost Periodicity of Inertial Manifolds}
\label{pap}
\setcounter{equation}{0}

In this section, we assume the external function $g$
in \eqref{pde1} is time periodic or almost periodic, and
establish the pathwise periodicity or almost  periodicity
of the random inertial manifolds constructed in the
previous section.
If    $g: \R \to D(A^\alpha)$ is  
almost periodic, then $g$ must be bounded; that is,
 $\sup\limits_{t\in \R} \| g(t)\|_{D(A^\alpha)}
<\infty$. Therefore, in this case,  condition
\eqref{g1} is trivially fulfilled.

The main  results of  this section are given below.

\begin{thm}
 \label{ap1}
 Suppose \eqref{f1}  and \eqref{b1}-\eqref{b3} hold
 with $k\in (0, \frac 12)$.
 If $g: \R \to D(A^\alpha)$ is almost periodic, then
 $\Phi$ has an almost  periodic random inertial
 manifold 
 $\widetilde{\calm}(\tau, \omega)$  as given by \eqref{imu_1}.
 \end{thm}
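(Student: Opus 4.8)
The plan is to obtain almost periodicity of $\widetilde{\calm}$ from that of $g$ through the graph map $\widetilde m$. First I note that an almost periodic $g:\R\to D(A^\alpha)$ is bounded, so \eqref{g1} holds automatically; hence Theorem \ref{imu} applies and $\widetilde{\calm}=\{\widetilde{\calm}(\tau,\omega)\}$ of \eqref{imu_1} is already a random inertial manifold of $\Phi$, with graph map $\widetilde m(\tau,\omega)(\cdot)=Q_nz(\omega)+m(\tau,\omega)(\cdot-P_nz(\omega))$ as in \eqref{imu_3}. So it only remains to verify the almost periodicity clause of Definition \ref{defperap}. Because $Q_nz(\omega)$ is independent of $\tau$ and the substitution $x=y-P_nz(\omega)$ is a bijection of $P_nH$, we have $\sup_{y\in P_nH}\|\widetilde m(\tau+\tau_0,\omega)(y)-\widetilde m(\tau,\omega)(y)\|=\sup_{x\in P_nH}\|m(\tau+\tau_0,\omega)(x)-m(\tau,\omega)(x)\|$, so it suffices to show: for every $\varepsilon>0$ there is $l=l(\varepsilon)>0$ such that every interval of length $l$ contains $\tau_0$ with $\sup_{x\in P_nH}\|m(\tau+\tau_0,\omega)(x)-m(\tau,\omega)(x)\|_{D(A^\alpha)}\le\varepsilon$ for all $\tau\in\R$ and $\omega\in\Omega$.

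The heart of the argument would be a perturbation estimate for the fixed points of $\cali$ in the initial time. Fix $x\in P_nH$, $\omega\in\Omega$, $\tau\in\R$ and a number $\tau_0$, and let $\xi_1^*=\xi^*(x,\omega,\tau+\tau_0)$ and $\xi_2^*=\xi^*(x,\omega,\tau)$ be the fixed points from Lemma \ref{lemb3}. I would split
$$\xi_1^*-\xi_2^*=\big(\cali(\xi_1^*,x,\omega,\tau+\tau_0)-\cali(\xi_2^*,x,\omega,\tau+\tau_0)\big)+\big(\cali(\xi_2^*,x,\omega,\tau+\tau_0)-\cali(\xi_2^*,x,\omega,\tau)\big),$$
the first bracket having $\cals$-norm at most $k\|\xi_1^*-\xi_2^*\|_\cals$ by \eqref{c8}. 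In the second bracket the terms $e^{-At}x$ and $F(\xi_2^*(s)+z(\theta_s\omega))$ in the definition \eqref{c1} of $\cali$ cancel, leaving only the forcing pieces $P_n(g(s+\tau+\tau_0)-g(s+\tau))$ and $Q_n(g(s+\tau+\tau_0)-g(s+\tau))$; re-running the forcing estimates \eqref{c5} and \eqref{c3} with $g$ replaced by this translate difference gives an $\cals$-bound of the form $\frac{2}{\lambda_1}\sup_{s\in\R}\|A^\alpha(g(s+\tau_0)-g(s))\|$ (the constant depending on $\lambda_n,\lambda_{n+1}$ only), where one uses the identity $\sup_{s\in\R}\|A^\alpha(g(s+\tau+\tau_0)-g(s+\tau))\|=\sup_{s\in\R}\|A^\alpha(g(s+\tau_0)-g(s))\|$. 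Absorbing the contraction term then yields $\|\xi_1^*-\xi_2^*\|_\cals\le\frac{2}{(1-k)\lambda_1}\sup_{s\in\R}\|A^\alpha(g(s+\tau_0)-g(s))\|$, and since $Q_n$ is a contraction on $D(A^\alpha)$ and $\|\xi_1^*(0)-\xi_2^*(0)\|_{D(A^\alpha)}\le\|\xi_1^*-\xi_2^*\|_\cals$,
$$\big\|m(\tau+\tau_0,\omega)(x)-m(\tau,\omega)(x)\big\|_{D(A^\alpha)}=\big\|Q_n(\xi_1^*(0)-\xi_2^*(0))\big\|_{D(A^\alpha)}\le\frac{2}{(1-k)\lambda_1}\,\sup_{s\in\R}\big\|A^\alpha\big(g(s+\tau_0)-g(s)\big)\big\|.$$

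To conclude I would feed in almost periodicity of $g$ in $D(A^\alpha)$: given $\varepsilon>0$, apply the definition with $\varepsilon'=\tfrac{1}{2}(1-k)\lambda_1\varepsilon$ to obtain $l=l(\varepsilon')$ such that every interval of length $l$ contains a $\tau_0$ with $\sup_{s\in\R}\|A^\alpha(g(s+\tau_0)-g(s))\|<\varepsilon'$; the displayed bound then gives $\sup_{x\in P_nH}\|m(\tau+\tau_0,\omega)(x)-m(\tau,\omega)(x)\|_{D(A^\alpha)}\le\varepsilon$ for all $\tau,\omega$, hence the same for $\widetilde m$, so $\widetilde{\calm}$ is pathwise almost periodic by Definition \ref{defperap} (and satisfies \eqref{nota2}). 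I expect the main obstacle to be exactly the middle step: carefully re-deriving the forcing-term estimates of Lemma \ref{lemb2} for the translate difference $g(\cdot+\tau_0)-g(\cdot)$ and checking that every constant appearing there is a function of $\lambda_n,\lambda_{n+1},k,\alpha$ only, never of $x,\omega,\tau$. That uniformity, together with the fact that translating $g$ by $\tau_0$ shifts the uniform-in-time size of the perturbation to itself, is precisely what makes the bound on $m$ uniform and hence yields almost periodicity; the remaining steps are routine bookkeeping.
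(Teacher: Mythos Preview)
Your proposal is correct and follows essentially the same route as the paper: both reduce the question to a uniform bound on $\|\xi^*(x,\omega,\tau+\tau_0)-\xi^*(x,\omega,\tau)\|_\cals$, obtain a contraction factor $k$ from the $F$-terms and a uniform forcing bound from the $g$-difference, and absorb to conclude. The only cosmetic differences are that the paper expands $\cali(\xi_1^*,x,\omega,\tau+\tau_0)-\cali(\xi_2^*,x,\omega,\tau)$ directly rather than inserting your intermediate term $\cali(\xi_2^*,x,\omega,\tau+\tau_0)$, and it uses the slightly sharper constant $\lambda_n$ (choosing $\varepsilon'=\tfrac12(1-k)\lambda_n\varepsilon$) in place of your $\lambda_1$.
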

 
 \begin{proof}
By the almost periodicity of $g$, 
for every $\varepsilon >0$, there
exists a positive number  $l= l(\varepsilon)$ such
that any interval of length $l$ contains a  point
$\tau_0$  such that 
\be\label{app1}
\| g(r + \tau_0) -g(r) \|_{D(A^\alpha)} \le 
{\frac 12} \varepsilon (1-k)\lambda_n,
 \ \ \mbox{for all} \ \ 
 r  \in \R.
 \ee
 Based on \eqref{app1} we will prove 
 \be\label{app3}
 \sup_{x \in P_n H}
 \|\widetilde{m}(\tau+\tau_0, \omega) (x)
 -\widetilde{m}(\tau, \omega) (x) \|_{D(A^\alpha)}
 \le \varepsilon,
 \ \ \mbox{for all} \ \ \tau \in \R ,
 \ee
 which will complete the proof.
 Let $\xi^*(x, \omega, r)$ be the unique
 fixed point of $\cali(\cdot, x, \omega, r)$ given by
 \eqref{c1}
 for every fixed $x\in P_nH$, $\omega\in \Omega$
 and $r \in \R$. Then by \eqref{c1}, \eqref{pn1}-\eqref{qn2},
 \eqref{f1} and \eqref{app1} we get
 for $t \le 0$,
 $$ e^{\mu t}
  \| \xi^*(x, \omega, \tau+\tau_0) (t)
 -
 \xi^*(x, \omega, \tau) (t) \|_{D(A^\alpha)}
 $$
 $$
 = e^{\mu t} \| \cali (
 \xi^*(x, \omega, \tau+\tau_0), x, \omega, \tau +\tau_0
 )
 -
  \cali (
 \xi^*(x, \omega, \tau), x, \omega, \tau 
 ) \|_{D(A^\alpha)}
 $$
 $$
 \le
 \int_t^0e^{\mu t}
 \| A^\alpha e^{-A(t-s)}
 P_n ( F(\xi^*(x, \omega, \tau+\tau_0) (s)
 + z(\theta_s \omega))
 -  F(\xi^*(x, \omega, \tau) (s)
 + z(\theta_s \omega))
 )\| ds
$$
$$
 +
 \int_{-\infty}^te^{\mu t}
 \| A^\alpha e^{-A(t-s)}
 Q_n ( F(\xi^*(x, \omega, \tau+\tau_0) (s)
 + z(\theta_s \omega))
 -  F(\xi^*(x, \omega, \tau) (s)
 + z(\theta_s \omega))
 )\| ds
$$
$$
+ e^{\mu t}\int_t^0 \| e^{-A (t-s)} P_n A^\alpha
( g(s+\tau +\tau_0) -g(s+\tau) ) \| ds
+ e^{\mu t}\int_{-\infty}^t \|  e^{-A (t-s)} Q_n A^\alpha
( g(s+\tau +\tau_0) -g(s+\tau) ) \| 
$$
$$
\le
L 
\| \xi^*(x, \omega, \tau+\tau_0) - 
\xi^*(x, \omega, \tau)\|_\cals
\left (
\lambda_n^\alpha \int_t^0 e^{(\mu-\lambda_n)(t-s)} ds
+
\int_{-\infty}^t
(\frac {\alpha^\alpha}{(t-s)^\alpha} +\lambda_{n+1}^\alpha )
e^{(\mu -\lambda_{n+1}) (t-s)} ds
\right )
$$
 $$
+ e^{\mu t}\int_t^0   e^{-\lambda_n  (t-s)}  
  \| g(s+\tau +\tau_0) -g(s+\tau)  \|_{D(A^\alpha)} ds
+ \int_{-\infty}^t   e^{- \lambda_{n+1}  (t-s)}  
\| g(s+\tau +\tau_0) -g(s+\tau)   \| _{D(A^\alpha)}
$$
$$
\le
L 
\left ( {\frac {\lambda_n^\alpha}{\mu-\lambda_n}}
+ {\frac {\lambda_{n+1}^\alpha +c_\alpha (\lambda_{n+1} -\mu)^\alpha}
{\lambda_{n+1} -\mu}}
\right )
\| \xi^*(x, \omega, \tau+\tau_0) - 
\xi^*(x, \omega, \tau)\|_\cals
$$
\be\label{app5}
+
{\frac 12} \varepsilon (1-k)\lambda_n
e^{\mu t}
 \int_t^0   e^{-\lambda_n  (t-s)}    ds
+ {\frac 12} \varepsilon (1-k)\lambda_n
 \int_{-\infty}^t   e^{- \lambda_{n+1}  (t-s)}   ds.
\ee
Since $t \le 0$ and  $\mu \in (\lambda_n, \lambda_{n+1})$, the last
integral in \eqref{app5}
is bounded by $\varepsilon (1-k)$, which together with
\eqref{b1} and \eqref{app5} implies
\be\label{app7}
  \| \xi^*(x, \omega, \tau+\tau_0)  
 -
 \xi^*(x, \omega, \tau)   \|_\cals
 \le k  \| \xi^*(x, \omega, \tau+\tau_0)  
 -
 \xi^*(x, \omega, \tau)   \|_\cals
 +\varepsilon (1-k).
 \ee
 Thus we get
 $  
  \| \xi^*(x, \omega, \tau+\tau_0)  
 -
 \xi^*(x, \omega, \tau)   \|_\cals
 \le \varepsilon$ and hence
 $  
  \| \xi^*(x, \omega, \tau+\tau_0)  (0)
 -
 \xi^*(x, \omega, \tau)  (0)  \|_{D(A^\alpha)}
 \le \varepsilon$.
 Since
 $\xi^*(x, \omega,  r)  (0) = x+m(r, \omega) x$
 for all $r\in \R$,  we
 obtain
 $\| m(\tau+\tau_0, \omega)(x)-m(\tau, \omega) (x) \|_{D(A^\alpha)}
 \le \varepsilon$ for all $x\in P_n H$, which along with \eqref{imu_3}
yields \eqref{app3},  and thus completes  the proof.
 \end{proof}
 
 Finally,  we present the pathwise periodicity of random inertial
 manifolds.
 
\begin{thm}
 \label{per1}
 Suppose \eqref{f1}  and \eqref{b1}-\eqref{b3} hold
 with $k\in (0, \frac 12)$.
 If $g: \R \to D(A^\alpha)$ is periodic with period $T>0$, then
 $\Phi$ has a $T$-periodic random inertial
 manifold 
 $\widetilde{\calm}(\tau, \omega)$  as given by \eqref{imu_1}.
 \end{thm}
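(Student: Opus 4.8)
The plan is to show that the Lipschitz map $m$ constructed in Section~\ref{eim} is $T$-periodic in its first argument, i.e.\ $m(\tau+T,\omega)(x)=m(\tau,\omega)(x)$ for all $\tau\in\R$, $\omega\in\Omega$ and $x\in P_n H$. Granting this, equation~\eqref{imu_3} immediately gives $\widetilde m(\tau+T,\omega)=\widetilde m(\tau,\omega)$, so $\widetilde\calm$ defined by \eqref{imu_1} is a $T$-periodic random inertial manifold in the sense of Definition~\ref{defperap}: the Lipschitz graph structure, the invariance, and the asymptotic completeness were already established in Theorem~\ref{imu}, and by the remark following Definition~\ref{defperap} one also gets $\widetilde\calm(\tau+T,\omega)=\widetilde\calm(\tau,\omega)$.

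To obtain the periodicity of $m$, I would exploit that the map $\cali(\cdot,x,\omega,r)$ in \eqref{c1} depends on the parameter $r$ only through the translated forcing term $g(\,\cdot+r)$. Since $g$ is $T$-periodic, $g(s+\tau+T)=g(s+\tau)$ for every $s$, hence the integrands defining $\cali(\xi,x,\omega,\tau+T)(t)$ coincide pointwise with those of $\cali(\xi,x,\omega,\tau)(t)$; that is, $\cali(\xi,x,\omega,\tau+T)=\cali(\xi,x,\omega,\tau)$ for all $\xi\in\cals$. By Lemma~\ref{lemb3} each of these maps has a unique fixed point in $\cals$, so $\xi^*(x,\omega,\tau+T)=\xi^*(x,\omega,\tau)$. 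Evaluating at $t=0$ and using the identity $\xi^*(x,\omega,r)(0)=x+m(r,\omega)(x)$ from \eqref{c17}, we conclude $m(\tau+T,\omega)(x)=m(\tau,\omega)(x)$ for every $x\in P_n H$.

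I do not expect a serious obstacle: the argument is strictly simpler than the almost periodic case of Theorem~\ref{ap1}, because exact $T$-periodicity of $g$ makes the two contraction operators \emph{identical} rather than merely $\varepsilon$-close, so no quantitative fixed-point estimate analogous to \eqref{app5}--\eqref{app7} is required---one invokes only uniqueness of the fixed point. The single point worth checking carefully is that \eqref{c1} involves the external forcing solely via $g(\,\cdot+r)$ and in no other way, which is immediate from inspection of the formula; everything else is a direct transcription of the structural results already proved in Sections~\ref{eim} and~\ref{pap}.
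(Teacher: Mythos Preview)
Your proposal is correct and follows essentially the same strategy as the paper: both arguments show $\xi^*(x,\omega,\tau+T)=\xi^*(x,\omega,\tau)$ and then deduce $m(\tau+T,\omega)=m(\tau,\omega)$, hence $\widetilde m(\tau+T,\omega)=\widetilde m(\tau,\omega)$. The only cosmetic difference is that the paper reaches this via the contraction inequality $\|\xi^*(x,\omega,\tau+T)-\xi^*(x,\omega,\tau)\|_{\cals}\le k\,\|\xi^*(x,\omega,\tau+T)-\xi^*(x,\omega,\tau)\|_{\cals}$ obtained by specializing \eqref{app7}, whereas you bypass that estimate by noting directly that $\cali(\cdot,x,\omega,\tau+T)=\cali(\cdot,x,\omega,\tau)$ and invoking uniqueness of the fixed point---a harmless and slightly cleaner shortcut.
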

 
 \begin{proof}
 Following the arguments of  \eqref{app7}, we obtain in the 
 present case that
 $$  
  \| \xi^*(x, \omega, \tau+T)  
 -
 \xi^*(x, \omega, \tau)   \|_\cals
 \le k  \| \xi^*(x, \omega, \tau+ T)  
 -
 \xi^*(x, \omega, \tau)   \|_\cals.
 $$
 Since $k\in (0,1)$ we get
  $ \| \xi^*(x, \omega, \tau+T)  
 -
 \xi^*(x, \omega, \tau)   \|_\cals =0$,
 and hence
 $  \xi^*(x, \omega, \tau+T)  (0)
 =
 \xi^*(x, \omega, \tau) (0)   $.
 As a consequence, we get
 $m(\tau+T, \omega) =m(\tau, \omega)$
 and thus
 $\widetilde{m}
 (\tau+T, \omega) =
 \widetilde{m} (\tau, \omega)$.
  \end{proof}

\end{document}